\newtheorem{proposition}{Proposition}[section]
\newtheorem{theorem}{Theorem}[section]
\newtheorem{lemma}[theorem]{Lemma}
\newtheorem{prop}[theorem]{Proposition}
\newcommand{\mc}{\mathcal}
\newcommand{\cal}{\mathcal}
\newcommand{\rr}{\mathbb{R}}
\newcommand{\nn}{\mathbb{N}}
\newcommand{\C}{\mathbb{C}}
\newcommand{\cc}{\mathbb{C}}
\newcommand{\eps}{\varepsilon}
\newcommand{\pl}{\partial}
\newcommand{\x}{\times}
\newcommand{\til}{\widetilde}
\newcommand{\bbar}{\overline}
\newcommand{\cjd}{\rangle}
\newcommand{\cjg}{\langle}
\newcommand{\demi}{\frac{1}{2}}
\newcommand{\tra}{\textrm{Tr}}
\newcommand{\la}{\lambda}
\def\qed{\hfill$\square$\medskip}
\newcommand{\bbC}{\mathbb{C}}
\newcommand{\CI}{C^{\infty}}
\newcommand{\End}{\mathrm{End}}
\newcommand{\rest}[1]{\big\rvert_{#1}} 
\newcommand{\pa}{\partial}
\newcommand\paperintro%
\newcommand\paperbody%
\numberwithin{equation}{section}
\numberwithin{theorem}{section}
\begin{document}

\title[Inverse Boundary Problems for Systems in Two Dimensions]{Inverse Boundary Problems for Systems in Two Dimensions}

\author{Pierre Albin}
\address{Institut de Math\'ematiques de Jussieu \\
U.M.R. 7586 CNRS \\
Universit\'e Paris 7\\
175 rue du Chevaleret\\
75013 Paris}
\email{albin@math.jussieu.fr}

\author{Colin Guillarmou}
\address{D\'epartement de Math\'ematiques et Applications\\
U.M.R. 8553 CNRS \\
Ecole Normale Sup\'erieure \\
45 rue d'Ulm, F 75230 Paris \\
cedex 05, France}
\email{cguillar@dma.ens.fr}

\author{Leo Tzou}
\address{ Department of Mathematics, \\ The University of Arizona\\ 
617 N. Santa Rita Ave. P.O. Box 210089. \\ Tucson, AZ 85721Ð0089 USA}
\email{leo.tzou@gmail.com}

\author{Gunther Uhlmann}
\address{ University of California, \\ Irvine, 340 Rowland Hall\\ Irvine, CA 92697-3875,
and Department of Mathematics\\ The University of Washington\\
C-449 Padelford Hall\\
Box 354350\\
Seattle, Washington 98195-4350
USA}
\email{gunther@math.washington.edu}

\thanks{P. A. was supported by a postdoctoral fellowship
  of the Foundation Sciences Math\'ematiques de Paris.}
\thanks{C.G. is supported by ANR Grants No. ANR-09-JCJC-0099-01 and ANR-10-BLAN 0105.} 
\thanks{L.T. was supported by NSF Grant No. DMS-0807502.}
\thanks{G.U. was supported by NSF, a Chancellor Professorship at UC Berkeley and a 
Senior Clay Award.} 


\begin{abstract}
We prove identification of coefficients up to gauge by Cauchy data at the boundary 
for elliptic systems on oriented compact surfaces with boundary or domains of $\cc$. 
In the geometric setting, we fix a Riemann surface with boundary, and consider both a Dirac-type operator plus potential acting on sections of a Clifford bundle and a connection Laplacian plus potential
 (i.e. Schr\"odinger Laplacian with external Yang-Mills field)  acting on sections of a Hermitian bundle. 
 In either case we show that the Cauchy data determines both the connection and the potential up to a natural gauge transformation: conjugation by an endomorphism of the bundle which is the identity at the boundary. 
For domains of $\cc$, we recover zeroth order terms up to gauge from Cauchy data at the boundary in 
first order elliptic systems. 
\end{abstract}
\maketitle

\paperintro
\section*{Introduction}

In this work, we show that the Cauchy data space at the boundary identifies the coefficients (up to gauge) 
of a certain type of first order and second order elliptic systems on a Riemann surface and domains of $\cc$, generalizing the results of \cite{GTgafa}. We show here that Cauchy data at the boundary of a Riemann surface determine: \\
1) the $0$-th order term (up to gauge invariance) in the operator $D+V$ where $D$ is a Dirac type operator and $V$ an endomorphism on a clifford bundle,\\ 
2) the connection $\nabla$ and the potential  $V$ (up to gauge) acting on a complex vector bundle in the Schr\"odinger 
connection Laplacian $\nabla^*\nabla +V,$\\

\subsection{Connection Laplacians on surfaces, Schr\"odinger operator with external Yang-Mills field}
Let $M$ be a Riemann surface with boundary and $\pi : E\to M$ be a complex vector bundle equipped with a Hermitian structure $\langle\cdot,\cdot\rangle_E$. We denote the space of $E$-valued $k$-forms by $\Omega^k(E) = \CI(M; \Lambda^kT^*M \otimes E)$ and similarly use $\Omega^{p,q}(E)$ to denote $E$ valued forms of type $(p,q)$. Let $\nabla$ be a connection on $E$ and consider the connection Laplacian $L := \nabla^*\nabla$ where the adjoint $\nabla^*$ is taken with respect to the Hermitian inner product. For $V\in L^\infty(M,{\rm End}(E))$, we define the Cauchy data space of the operator $L+V$ by 
\begin{equation}\label{Cauchydata}
\mc{C}_{L+V}
:=\{ (u, \nabla_\nu u)|_{\pl M}\in H^{\demi}(\pl M,E)\x H^{-\demi}(\pl M,E); (L+V)u=0 ,  u\in H^1(M,E)\}
\end{equation} 
where $\nu$ is the inward normal vector field to the boundary. 

The Cauchy data space can not determine the connection $\nabla$ and the potential $V$, for there is a gauge invariance.
Indeed, it suffices to consider the conjugation of $L+V$ by a unitary section 
\begin{equation*}
	F \in \CI(M; \End(E)), \quad F^* = F^{-1}, \quad F|_{\pl M}={\rm Id}.
\end{equation*}
There is a natural lift of $F$ to a unitary endomorphism in $\CI(M;\End(T^*M\otimes E))$, still denoted $F$, 
defined by $(F\sigma)(X) := F (\sigma (X))$ for all $\sigma \in \CI(M;T^*M\otimes E)$ and $X \in TM,$ and 
it is easy to see that the Cauchy data space of $\til{L}+\til{V}=(F^{-1} \nabla F)^*(F^{-1} \nabla F)+F^{-1}VF$
is the same as the Cauchy data space of $L+V,$
\begin{equation*}
\mc{C}_{\til{L}+\til{V}}=\mc{C}_{L+V}.
\end{equation*} 

In this paper, we prove that the Cauchy data space determines $\nabla$ and $V$ up to gauge.  
Before we state the result, we use the notation $C^r(M)$ (with $r\geq 0$) for the usual $r-$H\"older space 
on $M$ and $W^{s,p}(M)$ (with $p\in[1,\infty]$, $s\in\rr$) for the Sobolev space with $s$ derivatives in $L^p(M)$, 
while $H^s(M):=W^{2,p}(M)$. 

A connection on a vector bundle $E$ is said to be in $C^r$ (or similarly $W^{s,p}$) if its (local) connection form 
is in $C^{r}(M; {\rm End}(E)\otimes T^*M)$. 
\begin{theorem}
\label{main thm}
Let $\nabla_1$ and $\nabla_2$ be two Hermitian connections on a smooth Hermitian bundle $E,$ of complex dimension  $n$ and   let $V_1,$ $V_2$ be two sections of the bundle ${\rm End}(E)$.
We assume that $\nabla_j$ have the regularity $C^r\cap W^{s,p}(M)$ with 
\begin{equation}\label{regrsp}
0 < r < s, \quad p \in (1,\infty) \,\,\,{\rm satisfy }\,\,
	r+s>1, \quad r\notin\nn, \quad sp>2n+2
	\end{equation}
and that $V_j\in W^{1,q}(M)$ with  $q>2$. Let $L_j:=\nabla_j^*\nabla_j$ and assume that 
the Cauchy data spaces agree $\mc{C}_{L_1+V_1}=\mc{C}_{L_2+V_2},$ then
there exists a unitary endomorphism 
$F \in C^1(M; \End(E)),$ satisfying $F|_{\pl M}={\rm Id},$
such that $\nabla_1 = F^{-1} \nabla_2 F$ and $V_1=F^{-1}V_2F$.
\end{theorem}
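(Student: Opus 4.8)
The plan is to reduce this inverse boundary problem for the connection Laplacian to a Cauchy-data identification problem for a first-order Dirac-type system, where tools from complex analysis on Riemann surfaces (holomorphic Morse functions and Carleman estimates) are available. The key observation is that on an oriented Riemann surface the connection Laplacian factors. Writing the connection $\nabla$ in terms of its $(1,0)$ and $(0,1)$ parts $\nabla = \nabla^{1,0} + \nabla^{0,1}$, and letting $\bar\partial_E = \nabla^{0,1}$ be the induced holomorphic structure on $E$, one has an identity of the schematic form $\nabla^*\nabla = 2(\bar\partial_E)^*\bar\partial_E + (\text{curvature term})$, so that $L+V$ can be encoded by a $2n\times 2n$ first-order elliptic system of Dirac type acting on $E\oplus E$. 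I would first carry out this reduction carefully, showing that $\mc{C}_{L_1+V_1}=\mc{C}_{L_2+V_2}$ forces the corresponding Cauchy data of the first-order systems to agree.

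Next I would construct complex geometrical optics (CGO) solutions. Fix a holomorphic Morse function $\Phi$ on $M$ (which exists on any Riemann surface with boundary), and seek solutions of $(L+V)u = 0$ of the form $u = e^{\Phi/h}(a + r)$ where $a$ is a suitable amplitude carrying the connection data, $h>0$ is a small parameter, and $r$ is a remainder controlled by a Carleman estimate. The Carleman estimate with limiting weight $\Phi/h$ is what makes the remainder $r$ small as $h\to 0$; the amplitude $a$ should be chosen to solve a transport-type equation built from $\nabla^{0,1}$ so that conjugating $L$ by $e^{\Phi/h}$ and applying it to $a$ kills the leading singularity. The regularity hypotheses \eqref{regrsp} on the connection, in particular $sp>2n+2$, are precisely what is needed to push the CGO construction through with the Sobolev-Hölder connection forms rather than smooth ones.

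The identification then proceeds by the standard integral-identity argument. Since the Cauchy data agree, for solutions $u_1$ of $(L_1+V_1)u_1=0$ and $u_2$ of $(L_2^*+V_2^*)u_2=0$ one obtains a vanishing boundary pairing, hence an interior identity $\int_M \langle (\nabla_1 - \nabla_2)\text{-terms} + (V_1 - V_2)u_1, u_2\rangle = 0$. Inserting the CGO solutions and taking $h\to 0$, the leading term of the resulting stationary-phase expansion localizes at the critical points of $\re(\Phi)$ and yields, after varying $\Phi$ to sweep out all such points, that the connection forms of $\nabla_1$ and $\nabla_2$ are gauge-equivalent: concretely one first recovers the $\bar\partial$-part, producing a candidate gauge $F$ defined by solving a $\bar\partial$-equation with the recovered data, with $F|_{\pl M}=\mathrm{Id}$ from the boundary-determination of the connection. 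After conjugating $\nabla_2$ by this $F$ to arrange $\nabla_1^{0,1}=\nabla_2^{0,1}$, a second integral identity at the next order isolates $V_1-V_2$ and forces $V_1 = F^{-1}V_2 F$.

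The main obstacle I expect is twofold. First, recovering the full connection rather than merely the potential: because $\nabla$ is a first-order (gauge) quantity, the leading CGO term sees it through a transport equation, and extracting it up to gauge requires carefully matching the amplitudes $a_1, a_2$ and constructing the endomorphism $F$ globally on $M$ (not just locally) with the correct unitarity and boundary normalization — this is where the $\bar\partial$-problem on the bundle and the Morse-function machinery are essential, and where the gauge invariance genuinely enters. Second, the low regularity: the connection forms are only in $C^r\cap W^{s,p}$, so every step — the factorization, the Carleman estimate, the amplitude construction, and the stationary-phase asymptotics — must be justified without smoothness, which forces the precise numerology in \eqref{regrsp}. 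I would expect the bulk of the technical work to lie in making the CGO construction and the stationary-phase localization rigorous at this regularity while keeping the gauge $F$ of class $C^1$.
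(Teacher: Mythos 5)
Your overall architecture is the paper's: the Bochner--Kodaira identity \eqref{FactForm} converts $(\nabla^*\nabla+W)u=0$ into the first-order system $\begin{pmatrix}\Omega+W & (\bar{\pl}^{\nabla})^*\\ \bar{\pl}^{\nabla} & -\mathrm{Id}\end{pmatrix}$ acting on $(u,\bar{\pl}^{\nabla}u)$, the second-order Cauchy data determines the first-order Cauchy data (this uses boundary determination of $X|_{\pl M}$ and $W|_{\pl M}$, which you correctly anticipate), and Bukhgeim-type CGO solutions with holomorphic Morse phases feed an integral identity. But the decisive mechanism by which the gauge $F$ is produced is absent from your plan. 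The paper does \emph{not} recover the $(0,1)$-part of the connection from a transport equation in the CGO amplitude; it eliminates the first-order terms at the outset by holomorphic trivializations $F_A$ solving $\bar{\pl}F_A=F_AA$ (Lemma \ref{equivCR} and Proposition \ref{holotrivial}, resting on Hill--Taylor's low-regularity Newlander--Nirenberg theorem --- this, and not the CGO construction, is what forces the numerology $sp>2n+2$ in \eqref{regrsp}; the CGO estimates only need $W^{1,q}$, $q>2$). The whole difficulty you flag as ``the main obstacle'' --- matching the gauges of the two \emph{different} connections globally with $F|_{\pl M}=\mathrm{Id}$ --- is resolved in Proposition \ref{boundary holo}: inserting CGOs for both operators into the integral identity yields $\int_M \langle \bar{\pl}(F_{A_2}F_{A_1}^{-1}a),b\rangle=0$ for all holomorphic $a$ and anti-holomorphic $b$, whence by the orthogonality characterization of holomorphic boundary values (Lemma \ref{orthogonality condition}, tested against $b=\pl(\theta e_j)$ with $\theta$ harmonic) every entry of $i_{\pl M}^*F_{A_2}F_{A_1}^{-1}$ extends holomorphically, which is exactly what permits renormalizing the trivializations to agree on $\pl M$. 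You name this obstacle but supply no substitute mechanism, and without it neither the reduction to diagonal potentials nor the normalization $F|_{\pl M}=\mathrm{Id}$ is available.

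The second genuine gap is unitarity of $F$, which you assert but do not argue. The Dirac-level result (Theorem \ref{iduptogauge}) only yields invertible $C^1$ morphisms $F,G$; nothing there is unitary. The paper extracts unitarity afterwards from the algebraic identity \eqref{resultat}: one reads off $F=G^{-1}$ together with $A_1=F^{-1}A_2F+F^{-1}\bar{\pl}F$ and $A_1^*=GA_2^*G^{-1}+G\bar{\pl}^*G^{-1}$; Hermitian-ness of the connections (so $X_j=A_j-A_j^{*_E}$) then shows that $F^{-1}$ and $F^{*}$ both solve the elliptic boundary value problem $\bar{\pl}H+A_1H-HA_2=0$, $H|_{\pl M}=\mathrm{Id}$, and uniqueness gives $F^{-1}=F^{*}$ --- the same computation simultaneously delivering $F^{-1}X_2F=X_1-F^{-1}dF$ and $F^{-1}W_2F=W_1$. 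Two smaller corrections: the remainders $r_h,s_h$ are controlled not by a Carleman estimate but by a Neumann series built on the explicit right inverse $D^{-1}$ of Proposition \ref{inverse} conjugated by $e^{\pm 2i\psi/h}$, with the $\mc{O}(h^{\frac12+\eps})$ bounds of Lemma \ref{estimate1}; and the potentials $V_j$ only require $W^{1,q}$, $q>2$, the H\"older--Sobolev hypotheses being reserved for the connection forms.
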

Observe that Theorem \ref{main thm} is a generalization of the scalar trivial bundle case in \cite{GTgafa} where $E = M \x \C$ and $\nabla_j = d + iX_j$ for $X_j$ a real valued 1-form. For scalar trivial bundle on domains of $\cc$, this was 
first proved (with partial data measurement) by Imanuvilov-Uhlmann-Yamamoto \cite{IUY2,IUY2'}.
Our result is new even in the case of domains in 
$\cc$ when the bundle is not a line bundle (this was known only under smallness assumption, see Li 
\cite{Li}).

We prove Theorem \ref{main thm} as  a consequence of an identifiability result for Dirac-type systems.

\subsection{Dirac systems on surfaces}

A Dirac vector bundle (also known as a Clifford vector bundle) on a Riemannian manifold $(M,g)$ 
is a complex vector bundle 
$E \longrightarrow M$ together with a Clifford multiplication map,
\begin{equation*}\label{cliffordmult}
	\gamma: \CI(M; T^*M) \longrightarrow \CI(M; \End(E)), \quad \text{ s.t. }
	\gamma(\eta)\gamma(\omega) + \gamma(\omega)\gamma(\eta) 
	= -2 g(\eta, \omega),
\end{equation*}
a Hermitian metric $\langle \cdot,\cdot \rangle_E$ and a Hermitian connection $\nabla$ satisfying
\begin{equation}\label{compatibility}
	\langle \gamma(\omega) s, t \rangle_E
	= - \langle s, \gamma(\omega) t \rangle_E, \quad
	\left[ \nabla_W, \gamma(\omega) \right] = \gamma( \nabla_W\omega )
\end{equation}
for every $\omega \in \CI(M; T^*M),$ $s,t \in \CI(M, E),$ and $W \in \CI(M; TM).$
In dimension $2$, there is a chirality operator defined by
\begin{equation*}
H:=  i\gamma(\theta^1)\gamma(\theta^2) 
\end{equation*}
where $(\theta^1, \theta^2)$ is any local orthonormal basis of $T^*M$. One easily checks that $H$ does not depend on 
the choice of  $(\theta^1, \theta^2)$ and therefore it can be defined globally on the surface. 
Since $H^2 = \mathrm{Id},$ it determines a splitting of $E,$
\begin{equation*}
	E = E^+ \oplus E^-,
\end{equation*}
the connection preserves this splitting and Clifford multiplication reverses it. 
The Dirac-type operator associated to this data is the composition
\begin{equation}
	D: H^1(M;E) \xrightarrow{\nabla}
	L^2(M; T^*M \otimes E) \xrightarrow{\gamma}
	L^2(M; E).
\label{DiracOpDef}\end{equation}
It is self-adjoint with respect to $\langle \cdot, \cdot \rangle_E,$ and odd with respect to the splitting of $E.$ 
As above, if $V\in W^{1,p}(M,{\rm End}(E))$, we can define the Cauchy data space of $D+V$ by 
\[\mc{C}_{D+V}:=\{ u|_{\pl M}\in H^{\demi}(\pl M,E); (D+V)u=0 ,  u\in H^1(M,E)\}.\]
We then prove the following theorem: 
\begin{theorem}
\label{dirac ident}
Let $(E, \langle\cdot,\cdot\rangle, \gamma, \nabla_j)$, $j = 1,2$ be two Dirac bundles on a Riemann surface $M$ with boundary. We assume the bundle and $\gamma$ are smooth, while $\nabla_j$ is $C^r\cap W^{s,p}$ with $s,p,r$ as in \eqref{regrsp}. 
 Let $V_1,V_2$ be $W^{1,q}$ sections of ${\rm End}(E)$, with $q>2$. Suppose that the Cauchy data spaces $\mc{C}_{D_1+V_1}$ and $\mc{C}_{D_2+V_2}$ coincide, then there exist  $C^1$ bundle-morphisms $\Phi,\Psi: E\to E$, preserving the splitting $E=E^+\oplus E^-$, with $\Psi=\Phi={\rm Id}$ on $\pl M$ and such that $\Phi(D_1+V_1)\Psi = D_2+V_2$.
\end{theorem}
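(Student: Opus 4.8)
The plan is to prove Theorem~\ref{dirac ident} by the method of complex geometric optics (CGO) solutions with holomorphic Carleman weights, generalizing the scalar construction of \cite{GTgafa} to the matrix-valued setting. First I would write $D$ in a local holomorphic coordinate $z$ adapted to the splitting $E = E^+ \oplus E^-$: since Clifford multiplication reverses the splitting while $\nabla$ preserves it, the Dirac operator becomes off-diagonal and is built from $\bar\partial$-type operators twisted by the $(0,1)$-part of the connection. Thus $(D+V)u = 0$ is equivalent to a first-order elliptic system of the schematic form $\bar\partial_\nabla u^+ = (\text{zeroth order})\, u$, $\partial_\nabla u^- = (\text{zeroth order})\, u$, and the problem reduces to an inverse problem for a $\bar\partial$-connection plus potential.

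Next I would construct CGO solutions of the form $u = e^{\varphi/h}(a + r_h)$, where $\varphi$ is a holomorphic Morse function on $M$ (whose existence on a Riemann surface with boundary follows from the Riemann--Roch arguments of \cite{GTgafa}), $a$ is a matrix-valued amplitude solving the homogeneous transport equation $\bar\partial_\nabla a = 0$ (that is, $a$ is holomorphic in the holomorphic structure induced by $\nabla$, encoding the parallel transport of the connection), and $r_h = O(h)$ is a remainder controlled by a Carleman estimate. The Carleman estimate would be obtained by applying the scalar weighted estimate componentwise and absorbing the first-order connection term and the $W^{1,q}$ potential $V$ as perturbations; the low regularity $C^r\cap W^{s,p}$ in \eqref{regrsp} is precisely what is needed to make these perturbations relatively bounded while keeping $a\in C^1$.

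With CGO solutions in hand for $D_1+V_1$ and for the formal adjoint of $D_2+V_2$ (built from the conjugate weight $-\bar\varphi$), the hypothesis $\mc{C}_{D_1+V_1}=\mc{C}_{D_2+V_2}$ yields, via Green's formula and the matching of boundary data, an integral identity of the schematic form
\begin{equation*}
  \int_M \big\langle \big( (\nabla_1 - \nabla_2) + (V_1 - V_2) \big)\, u_1,\, u_2 \big\rangle_E \, dv = 0.
\end{equation*}
Inserting the CGO solutions and letting $h\to 0$, the oscillatory factor $e^{(\varphi - \bar\varphi)/h}$ localizes the integral at the critical points of $\re\varphi$ by stationary phase. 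At each critical point the leading term forces a pointwise algebraic relation between the two connections (equivalently, between the two holomorphic structures) modulo the holonomy factors carried by the amplitudes, and the subleading term produces the analogous relation for $V_1$ and $V_2$. Since the critical point of a Morse holomorphic function can be placed at any prescribed interior point, these relations hold throughout $M$; assembling the holonomy factors then produces the bundle morphisms $\Phi,\Psi$ intertwining $D_1+V_1$ and $D_2+V_2$, and a boundary-determination step (unique continuation from equal Cauchy data near $\pl M$) ensures $\Phi=\Psi=\mathrm{Id}$ on $\pl M$.

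The main obstacle is the non-commutativity inherent in the matrix setting: in the scalar case of \cite{GTgafa} the amplitude is a scalar exponential of an antiderivative of the connection form, whereas here the transport equation $\bar\partial_\nabla a = 0$ is a genuine first-order system whose solution is a path-ordered exponential, so the recovered gauge is matrix-valued and a priori only locally defined. The crux is therefore to show that the pointwise algebraic relations obtained from stationary phase patch together into globally well-defined morphisms $\Phi,\Psi$ of the required regularity $C^1$, rather than merely local gauge transformations; this requires tracking how the holonomy of $\nabla_1$ relative to $\nabla_2$ depends on the base point and verifying that the resulting endomorphism is independent of the chosen Morse function and compatible with the splitting $E = E^+\oplus E^-$.
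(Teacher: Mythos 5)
Your outline assembles the right toolbox (reduction to a $\bar\pl$-system, CGO solutions with Morse holomorphic phases, Green's identity), but the crux you yourself flag --- globalizing the matrix-valued gauge --- is left unresolved, and the step you propose in its place would actually fail. In the integral identity, the connection-difference block pairs the $e^{\Phi/h}$-component of $u_1$ against the $e^{-\bar\Phi/h}$-component of the adjoint solution $u_2$, and in the Hermitian pairing the phases cancel exactly, since $e^{\Phi/h}\,\overline{e^{-\bar\Phi/h}}=1$. Hence that term carries \emph{no} oscillatory factor, is not localized at the critical points of $\im\Phi$ by stationary phase, and cannot produce a pointwise algebraic relation between $\nabla_1$ and $\nabla_2$ --- nor could any such relation hold, precisely because of the gauge invariance the theorem is stated up to. The same phase cancellation shows the off-diagonal $A$-blocks cannot be ``absorbed as perturbations'' in the remainder construction: unlike the diagonal blocks $Q^\pm$, which acquire factors $e^{\pm 2i\psi/h}$ and hence smallness $O(h^{1/2+\eps})$ (and this gain needs $W^{1,q}$ regularity and an integration by parts, not mere boundedness), the $A$-terms come with no smallness in $h$ at all.

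The paper closes exactly this gap by removing the connection term globally and exactly \emph{before} any CGO construction, and this is where the matrix case genuinely differs from the scalar case of \cite{GTgafa}. The CR operator $\bar\pl+A$ induces a holomorphic structure of class $C^{1+r}\cap W^{1+s,p}$ on the trivial bundle (Lemma \ref{equivCR}, via the low-regularity Newlander--Nirenberg theorem of Hill--Taylor \cite{HiTay}), and every holomorphic vector bundle on a bordered Riemann surface is holomorphically trivial (Proposition \ref{holotrivial}, after Forster \cite{Fo}); this yields a globally defined invertible $F_A$ with $\bar\pl F_A=F_AA$ --- precisely the ``path-ordered exponential'' you worried was only local, obtained here as a theorem rather than by patching stationary-phase data. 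The non-oscillating term in the integral identity then serves a different purpose: it gives $\int_M \cjg \bar\pl\bigl(F_{A_2}F_{A_1}^{-1}a\bigr),b\cjd=0$ for all holomorphic sections $a$ and antiholomorphic $1$-forms $b$, whence by Stokes and the orthogonality characterization of Lemma \ref{orthogonality condition} (with $b=\pl(\theta e_j)$, $\theta$ harmonic) the boundary value of $F_{A_2}F_{A_1}^{-1}$ extends holomorphically into $M$, allowing the trivializations to be renormalized to agree on $\pl M$ (Proposition \ref{boundary holo}); the boundary condition $\Phi=\Psi=\mathrm{Id}$ comes from this normalization together with the boundary determination of $Q^\pm$ from \cite{GTgafa}, not from unique continuation. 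Only after this reduction to diagonal potentials does the Bukhgeim-type stationary-phase argument (Proposition \ref{idpotential}) identify $Q^\pm$ pointwise at the dense set of critical points, where the Riemann--Roch choice of $\theta$ isolates one critical point. Without the global holomorphic triviality and the boundary-matching step, your argument does not close.
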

As we shall prove, this theorem actually follows from the particular case:
\begin{prop}\label{firstorderonM}
Let $\underline{\cc}^n:=M\x \cc^n$ and $E:=\underline{\cc}^n\oplus (\underline{\cc}^n\otimes (T^{0,1}M)^*)$. 
Consider the operators $D+V_j$, $j=1,2$, defined by 
 $D:=\begin{pmatrix}0 & \bar{\pl}^*\cr 
\bar\pl & 0\cr\end{pmatrix}$, $V_j=\begin{pmatrix}Q_j^+ & A_j\\
 B_j & Q_j^-\end{pmatrix}$, 
acting on sections of $E$, with $A,B\in W^{1,q}(M)$, $q>2$, and $Q^\pm\in C^r\cap W^{s,p}(M)$ such that $r,s,p$ satisfies
the condition \eqref{assump}.  
If the Cauchy data spaces $\mc{C}_{D+V_1}$ and $\mc{C}_{D+V_2}$ agree, then there exist $C^1$ bundle isomorphisms $F,G $ of $\underline{\bbC}^n$ such that  $F|_{\pl M}=G|_{\pl M}={\rm Id}$ and, as operators, 
\begin{equation*}
 D+V_2=  \begin{pmatrix}
	G & 0 \\ 0 & F^{-1}
	\end{pmatrix} (D+V_1) \begin{pmatrix}
	F & 0 \\ 0 & G^{-1}
	\end{pmatrix}.
\end{equation*}
\end{prop}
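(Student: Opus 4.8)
The plan is to adapt the Bukhgeim-type complex geometric optics scheme of \cite{GTgafa,IUY2} to the coupled first-order matrix system $D+V_j$ and to the low regularity \eqref{assump}. The first step is to convert the equality of Cauchy data spaces into an orthogonality identity. Since $D$ is formally self-adjoint and of first order, given $u$ with $(D+V_1)u=0$, the hypothesis $\mc{C}_{D+V_1}=\mc{C}_{D+V_2}$ provides $w$ with $(D+V_2)w=0$ and $w|_{\pl M}=u|_{\pl M}$, so that $u-w$ vanishes on $\pl M$ and $(D+V_2)(u-w)=(V_2-V_1)u$. Pairing with any solution $u'$ of the adjoint equation $(D+V_2)^*u'=0$ and integrating by parts — the boundary term drops because $(u-w)|_{\pl M}=0$ and $D$ is first order — yields the fundamental identity $\int_M \langle (V_1-V_2)u,u'\rangle_E\,dg=0$ for all such pairs $(u,u')$.

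Next I would construct the special solutions. Fix a Morse holomorphic function $\Phi$ on $M$ with a nondegenerate critical point $p_0$, which may be placed anywhere in the interior, and build $u=e^{\Phi/h}(a+r)$ solving $(D+V_1)u=0$ and $u'=e^{-\Phi/h}(b+r')$ solving $(D+V_2)^*u'=0$, where $a,b$ are amplitudes fixed by the leading-order transport equations with data prescribable freely at $p_0$ in each block of $E=\underline{\cc}^n\oplus(\underline{\cc}^n\otimes(T^{0,1}M)^*)$, and $r,r'$ are remainders. Because the coefficients lie only in $C^r\cap W^{s,p}$ and $W^{1,q}$, the remainders must be controlled in $L^p$-based spaces through Carleman estimates for $\bar\partial,\bar\partial^*$ conjugated by $e^{\Phi/h}$; condition \eqref{assump} is precisely what makes these estimates close and the remainder contributions lower order. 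Substituting into the identity, the integrand carries the oscillatory factor $e^{(\Phi-\bar\Phi)/h}=e^{2i\,\im\Phi/h}$, whose phase has a single nondegenerate critical point at $p_0$, and stationary phase extracts in the limit $h\to0$ the pointwise relation $\langle (V_1-V_2)(p_0)\,a(p_0),b(p_0)\rangle_E=0$.

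Varying $\Phi$ moves $p_0$ over the interior of $M$, and choosing $a,b$ in the various blocks turns this into a system relating the blocks of $V_1$ and $V_2$ at every interior point. These encode exactly the sought intertwining conditions: the off-diagonal blocks produce the first-order transport equations $\bar\partial F = F B_2 - B_1 F$ together with an analogous $\partial$-type equation for $G$ arising from conjugating $\bar\partial^*$, while the diagonal blocks give the algebraic relations $Q_2^+=G\,Q_1^+\,F$ and $Q_2^-=F^{-1}Q_1^-G^{-1}$. To finish, one solves the Cauchy--Riemann-type transport equations for $F$ and $G$ on $M$ with the normalization $F|_{\pl M}=G|_{\pl M}=\mathrm{Id}$ — a boundary-determination step, in which equal Cauchy data pins down the jets of the operators at $\pl M$, guaranteeing this normalization is consistent — and then checks that the recovered diagonal relations hold for the same pair $(F,G)$, that $F,G$ are invertible and of class $C^1$, and that the displayed conjugation identity holds.

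The main obstacle is twofold, and is exactly where the systems case departs from the scalar theory of \cite{GTgafa}. First, one must construct the complex geometric optics solutions for the \emph{coupled} first-order matrix operator and run stationary phase with matrix-valued amplitudes and coefficients only in $C^r\cap W^{s,p}$ and $W^{1,q}$; this is what forces the $L^p$-based Carleman machinery and dictates the exponents in \eqref{assump}. Second, one must assemble two \emph{independent, non-commuting} gauges $F$ and $G$ that are simultaneously constrained by the off-diagonal transport equations and the diagonal algebraic relations while remaining the identity on $\pl M$: no scalar integrating factor is available, so the $\bar\partial$- and $\partial$-systems for $F$ and $G$ must be solved compatibly and globally, and this non-abelian assembly with fixed boundary data is the genuinely new ingredient of the argument.
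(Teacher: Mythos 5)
Your skeleton for the diagonal blocks (Green-identity orthogonality relation, Bukhgeim-type CGOs $e^{\Phi/h}(a+r_h)$ with Morse holomorphic $\Phi$, stationary phase at interior critical points) matches the paper's Proposition \ref{idpotential}, but your treatment of the off-diagonal blocks — the genuinely new part — contains a gap that would make the argument fail. Conjugating $V$ by $\mathrm{diag}(e^{\Phi/h},e^{\bar\Phi/h})$ turns the diagonal entries into the oscillatory $e^{\pm 2i\psi/h}Q^{\pm}$, but leaves the off-diagonal entries untouched: $e^{-\Phi/h}A\,e^{\Phi/h}=A$ carries \emph{no} oscillatory factor. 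Consequently the off-diagonal terms are not damped by the $h$-dependent smoothing estimates, the Neumann-series remainder is $O(1)$ rather than $o(1)$, and stationary phase cannot extract them pointwise; in particular your claim that the off-diagonal blocks of the $h\to 0$ limit "produce the transport equations $\bar\pl F=FB_2-B_1F$" does not follow from the scheme you describe — stationary phase only sees the oscillatory (diagonal) blocks. Moreover, your final step of solving the CR-type equations for $F,G$ "with the normalization $F|_{\pl M}=G|_{\pl M}=\mathrm{Id}$" is an overdetermined problem: a $\bar\pl$-equation on a surface with boundary does not admit solutions with arbitrarily prescribed full boundary trace (just as a holomorphic matrix cannot have arbitrary boundary values). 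In the paper the identity boundary values are a \emph{conclusion} extracted from the equality of Cauchy data, not a free normalization.

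What the paper actually does is the reverse order. First it absorbs the off-diagonal terms into the principal part, viewing $\bar\pl+A_j$ as CR operators; Lemma \ref{equivCR} (via the Hill--Taylor low-regularity Newlander--Nirenberg theorem — this, not any Carleman machinery, is what the $C^r\cap W^{s,p}$ hypothesis \eqref{assump} is for) and Proposition \ref{holotrivial} produce $C^{1+r}$ trivializations $F_{A_j}$ with $\bar\pl F_{A_j}=F_{A_j}A_j$, with no boundary condition imposed; the factorization \eqref{factorization} then conjugates each $D+V_j$ to a diagonal-potential operator. Only then are CGOs used twice: once (Proposition \ref{boundary holo}) to show, from the non-oscillatory off-diagonal contribution to the Green identity, that $\int_M\langle F_{A_2}(A_2-A_1)F_{A_1}^{-1}a,b\rangle=0$ for all holomorphic $a$ and antiholomorphic $b$, which by Stokes and the Hodge-theoretic orthogonality criterion (Lemma \ref{orthogonality condition}) proves $i_{\pl M}^*F_{A_2}F_{A_1}^{-1}$ extends holomorphically, so the trivializations can be corrected to agree on $\pl M$; and once more (Proposition \ref{idpotential}) to identify the conjugated diagonal potentials by stationary phase, using Riemann--Roch to kill the other critical points and the boundary determination of $Q^\pm$ to remove boundary terms. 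Your instinct that a non-abelian global integration of the transport equation is the crux is correct, but the resolution is the holomorphic triviality of low-regularity holomorphic structures plus the boundary holomorphic-extension argument, which your proposal is missing. (A minor further point: Proposition \ref{denseset} provides Morse phases with a critical point only at a dense set of interior points, not at an arbitrary prescribed $p_0$, though continuity of the coefficients repairs this.)
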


The proof of this identification result is based on previous work \cite{GTgafa} of the second and third authors, itself based on a new idea of Bughkeim \cite{Bu}, and the construction of holomorphic phases on Riemann surfaces in \cite{GTAust}. 

\subsection{Systems in domains of $\cc$}
For domains $\Omega$ of $\cc$, our proof shows the identification up to gauge of zeroth order terms 
$V$ by Cauchy data 
for any elliptic $(m+n)\x (m+n)$ systems of the form 
\[  \left(\begin{matrix} 
\bar{\pl} & 0 \\
0 & \pl 
\end{matrix}\right) \left(\begin{matrix} 
u \\
v 
\end{matrix}\right) + \left(\begin{matrix} 
A & Q^+ \\
Q^- & B 
\end{matrix}\right)\left(\begin{matrix} 
u \\
v 
\end{matrix}\right)=0
\]
where $\bar{\pl}$ acts on each component  of $\cc^m$ valued functions $u$ by $\pl_{\bar{z}}$ and 
$\pl$ acts on each component of $\cc^n$ valued functions $v$ by $\pl_{z}$, and $Q^\pm,A,B$ are matrix valued 
functions satisfying similar asumptions as in Proposition \ref{firstorderonM}. We refer to Theorem 
\ref{systeminC} below for a precise statement.\\

\subsection{State of the art in two dimensions}

Let us recall some known results about Calder\'on inverse type problem in dimension 2 (we do not 
discuss here references of higher dimensional results). 

For inverse problems on domains in $\cc$, 
Nachman \cite{Nach} proved that the Cauchy data space determines a 
$C^2$-conductivity (with a reconstruction method). Before that, Sylvester \cite{Syl}Ê showed how to reduce the problem for anisotropic conductivities to isotropic conductivities.

Brown and Uhlmann \cite{BrUh} used the $\bar{\pl}$ factorization of Beals-Coifman \cite{BeCoi} for solving the identification result for the isotropic conductivity with regularity $W^{1,p}$ for $p>2$.
The most general result in terms of regularity is for $L^\infty$-conductivity by Astala-P\"aiv\"arinta  \cite{AP} (and \cite{ALP} for anisotropic case), using quasiconformal methods. 
The identification of an $L^\infty$-potential in the Schr\"odinger operator in a domain of $\cc$  was proved recently
by Bukhgeim \cite{Bu}, after the problem had been open for more than 20 years. 
This was then extended  with only partial data measurements for domains in $\cc$
by Imanuvilov-Uhlmann-Yamamoto \cite{IUY}.
For magnetic Schr\"odinger operators, Kang-Uhlmann \cite{KaUh} showed identification of a magnetic field (up to gauge) and a potential in a simply connected domain under smallness assumption in $L^\infty$-norm of the potential. For general elliptic second operators on domains in $\cc$, and with only partial data measurements, the identification was shown by Imanuvilov-Uhlmann-Yamamoto \cite{IUY2, IUY2', IUY3}, while identification for partial data with disjoints measurements for Dirichlet and Neumann data has been proved by the same authors in \cite{IUY3}.

For what concerns elliptic systems, Novikov-Santacesaria \cite{NoSa} considered recently $\Delta+V$ acting on vector valued functions on a domain of $\cc$, with $V$ a metric potential, they show identification of $V$ from Cauchy data space. Li \cite{Li} proved identifiability up to gauge for Yang-Mills Schr\"odinger system under smallness assumption
on the coefficients.

In the geometric case, the determination of the conformal class of a metric $g$ from the Cauchy data space of $\Delta_g$ on a Riemann surface with boundary was first shown by Lassas-Uhlmann \cite{LaUh}, followed by Belishev \cite{Be} and Henkin-Michel  \cite{HeMi1} (with a reconstruction procedure). The identification 
of an isotropic conductivity on a Riemann surface was shown by Henkin-Michel \cite{HeMi} with reconstruction; it was extended by Henkin-Santacesaria \cite{HeSa} to anisotropic conductivities for surfaces embedded in $\rr^3$.  
Guillarmou-Tzou \cite{GTAust,GTduke} show identification of a $C^{1,\alpha}$-potential 
in the Schr\"odinger potential on a fixed Riemann surface (with only partial measurement), generalizing the result of \cite{IUY} to Riemann surfaces. In \cite{GTgafa}, it is proved that one can determine a connection (up to gauge) and a potential in the connection Laplacian with potential on a complex line bundle (from the full Cauchy data space).\\


The paper is organized as follows: we first prove that a Dirac type system on a Riemann surface with boundary can be reduced to a $\bar{\pl},\bar{\pl}^*$ type system on the trivial bundle $M\x \cc^{2n}$. Then we prove Theorem \ref{dirac ident} by using the Complex Geometric Optics method (also called Faddeev exponential solutions) developed by Bughkeim \cite{Bu} for domains in $\cc$, and extended by \cite{GTgafa} to Riemann surfaces. Finally we show that the inverse problem for the connection Laplacian  of Theorem \ref{main thm} can be reduced to Theorem \ref{dirac ident}. 

\paperbody
\begin{section}{Dirac operators and $\bar\pl$ operators}

In this section, we show that any system of  Dirac  type $(D+V)u=0$ on a Dirac vector bundle can be reduced to a 
$\bbar{\pl}, \bbar{\pl}^*$ system.
We will work with holomorphic structures of low regularity following \cite{HiTay}.\\

If $\mc{B}$ is a regularity space (such as a Sobolev space or a H\"older space),
a  holomorphic structure with regularity $\mc{B}$ on a smooth complex vector bundle $E$ of rank $n$  over a Riemann surface $M$ with boundary is an atlas of local trivializations 
$h_i: E|_{U_i}\to U_i\x \cc^n$, $i\in I$, with regularity $\mc{B}$  such that the transition functions 
$h_{ij}: U_i\cap U_j\to GL(N,\cc)$ are holomorphic with respect to the holomorphic structure on 
$M$. 
A holomorphic section $s: U\to E$ is a section such that $h_i\circ s$ is holomorphic as a map from 
$U$ to $\cc^n$. 

Following Hill and Taylor \cite{HiTay}, we will usually ask that our holomorphic structures have regularity
$C^{r+1}\cap W^{s+1,p}$ where 
\begin{equation}
	0 < r < s, \quad p \in (1,\infty) \text{ satisfy }
	r+s>1, \quad r\notin\nn, \quad sp>2n+2.
\label{HiTayReg}\end{equation}

A holomorphic vector bundle over a surface with boundary is holomorphically trivial.
This is shown for instance in Forster \cite[Th 30.1 and Th 30.4]{Fo} for smooth holomorphic structures, and with small modification yields:
\begin{prop}\label{holotrivial}  
Let $E\to M$ be a holomorphic vector bundle of rank $n$ with regularity $C^r\cap W^{s,p}$
 over a compact Riemann surface $M$ with non-empty boundary, where $r,s,p$ are as in \eqref{HiTayReg}.
Then $E$ is holomorphically trivial in the sense that there exist $n$ holomorphic sections  
$f_1,\dots, f_n\in C^{r+1}\cap W^{s+1,p}(M,E)$  
such that at every point $x\in M$, $f_1(x),\dots,f_n(x)$ are linearly independent in the fiber $E_x$.
\end{prop}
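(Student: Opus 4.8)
The plan is to follow Forster's two-step strategy for smooth holomorphic bundles and verify that each step survives the weakening of regularity to $C^r\cap W^{s,p}$ under the Hill--Taylor hypotheses \eqref{HiTayReg}. The two ingredients are: (i) the vanishing of the relevant Dolbeault cohomology $H^1(M;\mc{O}(E))$ for a holomorphic bundle over an open Riemann surface (equivalently, a compact one with nonempty boundary), which provides global holomorphic solutions of $\bbar\pl$-equations; and (ii) a reduction, via these solutions, to the statement that $E$ admits $n$ global holomorphic sections that are everywhere linearly independent. The key regularity point is that we must produce sections in $C^{r+1}\cap W^{s+1,p}$, gaining one derivative over the bundle's transition functions, which is exactly what elliptic regularity for $\bbar\pl$ will supply.

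First I would set up the cohomological machinery at low regularity. The transition functions $h_{ij}$ are holomorphic, so they are automatically smooth on overlaps wherever they are defined, and the obstruction to gluing local holomorphic data into global holomorphic data lies in $H^1$ of the sheaf of holomorphic sections. Because $M$ is a compact surface \emph{with boundary}, it is a Stein-type (noncompact, after doubling or passing to the interior) object for which this first cohomology vanishes; this is the content of Forster's Theorems 30.1 and 30.4. At this stage I would invoke the Hill--Taylor theory of holomorphic structures with limited regularity: the solvability of the inhomogeneous $\bbar\pl$-equation $\bbar\pl u = f$ with the sharp gain of one derivative ($f\in C^r\cap W^{s,p}$ giving $u\in C^{r+1}\cap W^{s+1,p}$) holds precisely under the range \eqref{HiTayReg}, which is why those inequalities appear in the hypothesis. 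The condition $sp>2n+2$ guarantees an algebra/multiplication structure (so products and inverses of sections stay in the space) and embeds the regularity space into continuous endomorphisms, which is needed to make sense of pointwise linear independence.

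Next I would carry out the construction of the frame. Starting from local holomorphic sections that are linearly independent near a point, I would use the vanishing of $H^1$ to patch them into global holomorphic sections $f_1,\dots,f_n$, correcting the patching error (a $\bbar\pl$-closed datum) by solving $\bbar\pl$ with the one-derivative gain. Linear independence at every point is then an open condition that one arranges by a genericity/perturbation argument, possibly after first trivializing a determinant line bundle: the section $f_1\wedge\cdots\wedge f_n$ is a holomorphic section of $\det E$, and I would choose it to be nowhere vanishing, again using that $\det E$ is a holomorphic line bundle over a surface with boundary and hence holomorphically trivial. Throughout, each solution inherits the regularity $C^{r+1}\cap W^{s+1,p}$ from the elliptic estimate.

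The main obstacle I anticipate is purely the regularity bookkeeping rather than the topology, since the topological triviality is classical. One must check that Forster's arguments---which freely use smoothness of partitions of unity, products, and solutions of $\bbar\pl$---go through when the bundle data and the solutions live only in $C^r\cap W^{s,p}$. The delicate points are: ensuring the patching (Cousin-type) problem can be solved with the claimed gain of a derivative, so that the resulting sections land in $C^{r+1}\cap W^{s+1,p}$ and not merely in a lower space; and verifying that the multiplication and inversion operations needed to change frames preserve this regularity, which is exactly where the algebra condition $sp>2n+2$ and the constraint $r+s>1$ are used. I expect the bulk of the work to be confirming that the estimates in \cite{HiTay} yield the sharp one-derivative improvement uniformly, after which the conclusion follows by the same formal steps as in the smooth case of \cite{Fo}.
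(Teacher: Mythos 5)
Your overall route is the same as the paper's: the paper proves this proposition by citing Forster \cite[Th 30.1 and Th 30.4]{Fo} for the smooth case and observing that a ``small modification'' handles the stated regularity, and you likewise propose Forster's cohomological triviality argument plus regularity bookkeeping. But two of your steps, as written, would not survive being carried out in detail. First, you misplace where the analysis lives. In the paper's definition of a holomorphic structure of regularity $\mc{B}$, the transition functions $h_{ij}$ are genuinely holomorphic --- hence smooth --- and only the trivializations $h_i$ have limited regularity. So the underlying holomorphic bundle is a classical one, Forster's theorem applies \emph{verbatim} to produce a global holomorphic frame in the holomorphic atlas, and the regularity claim $f_j\in C^{r+1}\cap W^{s+1,p}$ is pure bookkeeping: the frame is smooth in the holomorphic charts and is transported by the trivializations, which have exactly that regularity. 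No low-regularity solvability of $\bar\pl u=f$ with a one-derivative gain is needed in this proposition at all; the hard analytic input (the Hill--Taylor low-regularity Newlander--Nirenberg theorem \cite{HiTay}, where the conditions \eqref{HiTayReg} including $sp>2n+2$ actually originate) is consumed in Lemma \ref{equivCR}, when a CR operator of class $C^r\cap W^{s,p}$ is upgraded to a holomorphic atlas of class $C^{r+1}\cap W^{s+1,p}$ --- not here. Your reading that the derivative gain over the ``transition functions'' comes from elliptic regularity for $\bar\pl$ conflates these two steps.

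Second, and more seriously if one were really re-proving Forster at low regularity as you propose: your treatment of everywhere linear independence by a ``genericity/perturbation'' argument, or by ``choosing'' $f_1\wedge\cdots\wedge f_n$ to be a nowhere-vanishing section of $\det E$, is circular. Openness of linear independence only propagates independence where it already holds; the zero set of $f_1\wedge\cdots\wedge f_n$ is generically a nonempty discrete set, and triviality of $\det E$ does not let you remove those zeros by perturbation --- producing a frame with nowhere-vanishing determinant \emph{is} the content of the Grauert--R\"ohrl theorem, which Forster proves by induction on the rank, extracting a nowhere-vanishing section via $H^1(M;\mc{O}(E))=0$, Runge exhaustion, and splitting of the resulting extensions. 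Relatedly, your suggestion to ``pass to the interior'' to make $M$ noncompact loses all control at $\pl M$: applying Forster on the open interior gives a frame with no boundary regularity, whereas the conclusion requires $C^{r+1}\cap W^{s+1,p}$ up to the boundary. The correct small modification is to extend the holomorphic bundle across $\pl M$ to a slightly larger open Riemann surface $\til{M}\supset M$ (possible since near the boundary the bundle is described by finitely many holomorphic transition functions which extend), apply Forster there, and restrict.
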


Closely associated to a holomorphic structure on a complex vector bundle $E \to M$ is a Cauchy-Riemann type operator. We say that a first order differential operator
\begin{equation*}
	P: H^1(M; E) \longrightarrow L^2(M; (T^{0,1}M)^* \otimes E)
\end{equation*}
is a {\em CR operator} if it satisfies
\begin{equation*}
	P(f \xi) = f P(\xi) + (\bar\pl f) \xi, \quad
	\text{ for every } f \in \CI(M), \xi \in \CI(M, E).
\end{equation*}
A CR operator $P$ can be extended to forms and satisfies $P^2=0$ since on surfaces there are no $(0,2)$-forms.
If a Hermitian product is given on $E$, a CR operator $P$ is induced by (and induces) a unique Hermitian connection $\nabla$ in the sense that
\begin{equation*}
	P\xi = (\nabla \xi)^{0,1}
\end{equation*}
and we will denote this operator by $\bar{\pl}^\nabla.$
 We say that a holomorphic structure on $E$ is compatible with a CR operator 
$\bar{\pl}^\nabla$ if the holomorphic sections of $E$ are in $\ker \bar{\pl}^\nabla$.

Given a regularity space $\mc{B},$
we say that $\bar{\pl}^\nabla$ is a Cauchy-Riemann operator of class
$\mc{B}$  if there is a  holomorphic atlas on $M$ in which 
$\bar{\pl}^\nabla$ can be written locally as
\[\bar{\pl}^\nabla= \bar{\pl}+A\]
with $A\in \mc{B}(M,{\rm End}(E)\otimes (T^{0,1}M)^*).$

\begin{lemma}\label{equivCR}
Let $E\to M$ be a smooth complex vector bundle of rank $n$ and let $r,s,p$ be as in \eqref{HiTayReg}. 
There is a $C^{r+1}\cap W^{s+1,p}$ holomorphic structure on $E$ 
if and only if $E$ can be 
equipped with a CR operator $P$ of class 
$C^r\cap W^{s,p}.$
\end{lemma}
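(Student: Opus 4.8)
The plan is to prove the two implications separately: the direction producing a CR operator from a holomorphic structure is essentially algebraic, while the converse rests on local solvability of a $\bar\pl$-equation with low-regularity coefficients.

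For the forward direction, starting from a $C^{r+1}\cap W^{s+1,p}$ holomorphic structure, I would first invoke Proposition \ref{holotrivial} to obtain a global holomorphic frame $f_1,\dots,f_n\in C^{r+1}\cap W^{s+1,p}(M,E)$ that is linearly independent at every point. Any section of $E$ then has a unique expansion $\xi=\sum_j g_j f_j$ with scalar coefficients $g_j$, and I would define
\[
P\Big(\sum_j g_j f_j\Big):=\sum_j (\bar\pl g_j)\, f_j.
\]
A one-line computation using $\bar\pl(hg_j)=h\,\bar\pl g_j+(\bar\pl h)g_j$ shows $P(h\xi)=hP(\xi)+(\bar\pl h)\xi$, so $P$ is a CR operator. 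In the global holomorphic trivialization $E\cong M\times\cc^n$ determined by the frame $(f_j)$ one has $P=\bar\pl$ exactly, so the local expression $\bar\pl+A$ holds with $A=0$ and $P$ is of class $C^r\cap W^{s,p}$; equivalently, passing to a smooth trivialization through the change-of-frame matrix $\Phi\in C^{r+1}\cap W^{s+1,p}$ yields $P=\bar\pl+A$ with $A=-(\bar\pl\Phi)\Phi^{-1}\in C^r\cap W^{s,p}$, where I use that \eqref{HiTayReg} makes these spaces algebras so the products stay in class.

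For the converse, suppose $P$ is a CR operator of class $C^r\cap W^{s,p}$, so that in smooth local trivializations $\tau_i$ over holomorphic coordinate charts $U_i$ of $M$ one has $P=\bar\pl+A_i$ with $A_i\in C^r\cap W^{s,p}(U_i;\End(E)\otimes(T^{0,1}M)^*)$. The goal is to gauge away each $A_i$: on a sufficiently small disk I would seek an invertible matrix-valued $g_i$ of class $C^{r+1}\cap W^{s+1,p}$ solving
\[
\bar\pl g_i + A_i g_i = 0,
\]
since then $g_i^{-1}(\bar\pl+A_i)g_i=\bar\pl$ and the new trivialization $h_i:=g_i^{-1}\tau_i$ converts $P$ into $\bar\pl$. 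Writing $T$ for the Cauchy (Pompeiu) transform, which inverts $\bar\pl$ and gains one derivative, the equation is equivalent to the fixed-point problem $g_i=\mathrm{Id}-T(A_i g_i)$; on a small enough disk the map $f\mapsto T(A_i f)$ is a contraction, producing a unique solution $g_i$ close to the identity, hence invertible, and of class $C^{r+1}\cap W^{s+1,p}$ by the regularity gain of $T$. Since $P=\bar\pl$ in each $h_i$, the transition functions $t_{ij}=h_i\circ h_j^{-1}$ satisfy $\bar\pl t_{ij}=0$ (comparing $\bar\pl(t_{ij}\sigma_j)$ with $t_{ij}\bar\pl\sigma_j$) and are therefore holomorphic, so the $h_i$ form a $C^{r+1}\cap W^{s+1,p}$ holomorphic atlas whose holomorphic sections lie in $\ker P$.

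The main obstacle is this local solvability step: establishing that the Cauchy transform maps $C^r\cap W^{s,p}$ into $C^{r+1}\cap W^{s+1,p}$ with small norm on small disks, and that the relevant products remain in these spaces, so that the contraction argument closes and the gained regularity is genuine. This is exactly the low-regularity Cauchy--Riemann analysis of Hill and Taylor \cite{HiTay}, and it is where the full strength of the conditions \eqref{HiTayReg} ($r+s>1$, $r\notin\nn$, $sp>2n+2$) enters; I would cite their mapping and algebra properties rather than reprove them.
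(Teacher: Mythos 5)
Your proof is correct, but your converse direction takes a genuinely different route from the paper's. Your forward direction coincides with the paper's: both invoke Proposition \ref{holotrivial} to get a global holomorphic frame and observe that $P$ is the conjugate $F\bar\pl F^{-1}$ of the flat $\bar\pl$ by the resulting $C^{r+1}\cap W^{s+1,p}$ trivialization. For the converse, however, the paper does not solve any local $\bar\pl$-equation on $M$; instead it follows Kobayashi's proof of the Koszul--Malgrange theorem, building an almost-complex structure $J$ of class $C^r\cap W^{s,p}$ on the total space of $E$ (a complex manifold of dimension $n+1$), noting that $P^2=0$ makes $J$ formally integrable, and then citing Hill--Taylor's rough Newlander--Nirenberg theorem to produce local holomorphic trivializations of class $C^{1+r}\cap W^{s+1,p}$. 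This explains the hypothesis $sp>2n+2$ in \eqref{HiTayReg}: it is the Hill--Taylor condition in real dimension $2n+2$. Your approach stays on the two-dimensional base, where no integrability issue arises, and gauges away $A_i$ locally by solving $\bar\pl g_i+A_ig_i=0$ via the Cauchy transform; this is the classical one-dimensional proof of Koszul--Malgrange, is more elementary, and would in principle succeed under weaker hypotheses (essentially $sp>2$ and $r\notin\nn$), at the cost of having to establish yourself the mapping and algebra properties of the Cauchy transform on $C^r\cap W^{s,p}$ --- these are standard (Calder\'on--Zygmund plus Schauder, the latter being where $r\notin\nn$ enters), but they are not literally the content of \cite{HiTay}, whose main theorem is the integrability result the paper cites, so your deferral to that reference is slightly misplaced even though the needed estimates are folklore. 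One technical refinement: rather than asserting that $f\mapsto T(A_if)$ contracts directly in $C^{r+1}\cap W^{s+1,p}$ (where smallness of the disk does not obviously shrink the H\"older seminorms), run the contraction in $L^\infty$, where the kernel bound gives operator norm $O(\rho)$ on a disk of radius $\rho$, and then lift the regularity of the fixed point by bootstrapping through the integral equation $g_i=\mathrm{Id}-T(A_ig_i)$; with that ordering your argument closes.
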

\begin{proof} 
Given a $C^{r+1}\cap W^{s+1,p}$ holomorphic structure on $E,$ let
\begin{equation*}
	F = (f_1, \ldots, f_n): E\to M \times \bbC^n 
\end{equation*}
where $f_1,\ldots,f_n$ are the holomorphic sections from Proposition \ref{holotrivial}.
Then $F$ is a bundle isomorphism of class $C^{1+r}\cap W^{s+1,p}.$ 
If $\bar{\pl}$ is the  standard Cauchy-Riemann operator on the trivial bundle $M\x \cc^n$ induced by the $\bar{\pl}$ operator on $\CI(M),$ then $P = F \bar{\pl} F^{-1}$ is a CR operator on $E$ of class $C^r\cap W^{s,p}$ 
compatible with the holomorphic structure on $E.$

Conversely, given $P$ a CR operator of class $C^r\cap W^{s,p},$ we can find a holomorphic structure on $E$ following the proof of Kobayashi \cite[Ch. 1, Prop 3.7]{Ko}. This proof is in the smooth category, but we can modify it slightly to have it in the range of regularity assumed above, by using a low regularity version of  the famous Newlander-Nirenberg integrability result \cite{NewNi} due to Hill-Taylor \cite{HiTay}. 
To construct the holomorphic local trivializations, Kobayashi defines an almost-complex structure $J$ on $E$ which is integrable, and  with $J$ having the regularity of $A$ when we write the Cauchy-Riemann operator as $\bar{\pl}+A$ in a smooth trivialization $M\x \cc^n$ of $E$. Consider the same $J$ as Kobayashi (in the proof of Prop 3.7 of \cite{Ko}), it is $C^r\cap W^{s,p}$ since $A\in C^r\cap W^{s,p}$ by assumption and $J$ is formally integrable by the argument of Kobayashi (which just comes from $P^2=0$). 
We can then use the main result of Hill-Taylor \cite{HiTay}  which says that an integrable complex structure 
$J$ which is $C^r\cap W^{s,p}$ on a complex manifold of dimension $n+1$ induces local holomorphic trivializations of $E$ of class $C^{1+r}\cap W^{s+1,p}$ with the $s,r,p$ satisfying the conditions above.  
\end{proof}

We now show that on a surface with boundary, any Dirac-type operator \eqref{DiracOpDef} on a holomorphic vector bundle is induced by a CR-operator.
\begin{lemma}
\label{to dbar}
Let $(E,\cjg\cdot,\cdot\cjd, \gamma, \nabla)$ be a Dirac vector bundle of complex rank $2n$ over a smooth Riemann surface  $M$  with boundary,  and assume that  $\nabla$ and $\gamma$ have 
the regularity $C^{r}\cap W^{s,p}$ with $r,s,p$ 
satisfying \eqref{HiTayReg}. Let $D$ be the associated Dirac operator.
Then there exists a complex subbundle $E_0$ of $E$ of complex rank $n$ and 
a  bundle isomorphism
\begin{equation*}
	B: E \longrightarrow 
	 E_0 \oplus (E_0 \otimes (T^{(0,1)}M)^*)
\end{equation*}
such that 
\begin{equation*}
	B D B^{-1} = \sqrt{2}( \bar\pl^{\nabla} + (\bar\pl^{\nabla})^* ) 
\end{equation*}
where $\bar\pl^{\nabla}$ is the CR operator associated with the Hermitian connection $\nabla$ on $E.$
\end{lemma}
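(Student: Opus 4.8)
The plan is to realize $D$ as a twisted $\bar\pl^\nabla+(\bar\pl^\nabla)^*$ operator by exploiting the isotropic decomposition of the Clifford action in complex dimension one. I would take $E_0:=E^+$, the $(+1)$-eigenbundle of the chirality operator $H$ from the splitting $E=E^+\oplus E^-$, and build $B$ out of Clifford multiplication by $(0,1)$-forms.

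First I would fix a local orthonormal coframe $(\theta^1,\theta^2)$ and set $\beta:=\theta^1+i\theta^2$, $\bar\beta:=\theta^1-i\theta^2$, so that $\bar\beta$ spans $(T^{0,1}M)^*$ and $\beta$ spans $(T^{1,0}M)^*$. A direct computation from the Clifford relation $\gamma(\eta)\gamma(\omega)+\gamma(\omega)\gamma(\eta)=-2g(\eta,\omega)$ gives $\gamma(\beta)^2=\gamma(\bar\beta)^2=0$ together with
\[\gamma(\beta)\gamma(\bar\beta)=-2(1+H), \qquad \gamma(\bar\beta)\gamma(\beta)=-2(1-H).\]
Restricting to $E^+$ (where $H=1$), the first relation reads $\gamma(\beta)\gamma(\bar\beta)=-4$, which shows both that $\gamma(\bar\beta)\colon E^+\to E^-$ is an isomorphism (the ranks are equal, both $n$) and that $\gamma(\beta)\colon E^-\to E^+$ is surjective, hence an isomorphism; combined with $\gamma(\beta)^2=0$ this forces $\gamma(\beta)|_{E^+}=0$. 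In invariant terms, Clifford multiplication by $(1,0)$-forms annihilates $E^+$, Clifford multiplication by $(0,1)$-forms annihilates $E^-$, and the latter induces a bundle isomorphism
\[c\colon E_0\otimes (T^{0,1}M)^*\xrightarrow{\ \sim\ }E^-, \qquad c(s\otimes\alpha)=\gamma(\alpha)s,\]
of the same regularity $C^r\cap W^{s,p}$ as $\gamma$. I would then define $B:=\operatorname{Id}_{E_0}\oplus \sqrt2\,c^{-1}$, mapping $E=E^+\oplus E^-$ to $E_0\oplus(E_0\otimes(T^{0,1}M)^*)$.

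Next I would compute the blocks of $D=\gamma\circ\nabla$ relative to the splitting. Since $\nabla$ preserves the splitting while $\gamma$ of a $1$-form reverses it, $D$ is odd. For $s\in\CI(M,E^+)$ one has $Ds=\gamma(\nabla^{1,0}s)+\gamma(\nabla^{0,1}s)$; the first term vanishes because $(1,0)$-forms annihilate $E^+$, so $Ds=\gamma(\bar\pl^\nabla s)=c(\bar\pl^\nabla s)$ under the identification $(T^{0,1}M)^*\otimes E^+\cong E_0\otimes(T^{0,1}M)^*$. Hence the lower block of $BDB^{-1}$ equals $\sqrt2\,c^{-1}\circ(c\circ\bar\pl^\nabla)=\sqrt2\,\bar\pl^\nabla$, as desired. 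The constant $\sqrt2$ is precisely the one making $B$ unitary for the Hermitian metrics induced by $g$: using $\gamma(\bar\beta)^*=-\gamma(\beta)$ one finds $\gamma(\bar\beta)^*\gamma(\bar\beta)|_{E^+}=4$ while $|\bar\beta|^2=2$, so $c$ scales norms by $\sqrt2$ and $\sqrt2\,c^{-1}$ is an isometry.

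The remaining (upper) block I would not compute directly. Since $B$ is unitary and $D$ is self-adjoint, $BDB^{-1}$ is a self-adjoint odd operator on the orthogonal sum $E_0\oplus(E_0\otimes(T^{0,1}M)^*)$, so its upper block is the formal adjoint of its lower block; as the lower block is $\sqrt2\,\bar\pl^\nabla$, this forces the upper block to be $\sqrt2\,(\bar\pl^\nabla)^*$, whence $BDB^{-1}=\sqrt2(\bar\pl^\nabla+(\bar\pl^\nabla)^*)$. The point requiring the most care is the bookkeeping of this normalization constant, reconciling the Hermitian norm of $(0,1)$-covectors with the norm coming from the Clifford action so that $B$ is genuinely unitary and the constant is $\sqrt2$ rather than $1$ or $2$. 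By contrast the low-regularity hypotheses on $\gamma$ and $\nabla$ cause no difficulty, since $H$, the splitting $E^\pm$, the subbundle $E_0$, and the morphism $c$ are all constructed pointwise and algebraically from $\gamma$ and therefore inherit its class $C^r\cap W^{s,p}$.
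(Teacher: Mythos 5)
Your proof is correct, and at bottom it runs on the same algebra as the paper's: the isotropy relations $\gamma(\beta)^2=\gamma(\bar\beta)^2=0$ plus the anticommutation relation, a rank-$n$ subbundle annihilated by Clifford multiplication by $(1,0)$-forms, and a $\sqrt2$-normalized isomorphism $B$; indeed your $E_0=E^+$ coincides with the paper's $E_0=\gamma(Z^*)E$, since $\gamma$ of a $(1,0)$-form kills $E^+$ and maps $E^-$ isomorphically onto it. Still, there are two genuine differences worth recording. First, you define $E_0$ and $c$ invariantly through the chirality operator $H$, so the construction is frame-free, whereas the paper first invokes holomorphic triviality (Proposition \ref{holotrivial} applied to $T^{1,0}M$) to choose a global section $Z$ and works in that frame; your route dispenses with that input entirely for this lemma. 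Second, you obtain the upper block abstractly from unitarity of $B$ together with formal self-adjointness of $D$, while the paper computes it directly and identifies $-\mathfrak{i}(\bar Z^*)\nabla_Z$ with $(\bar\pl^{\nabla})^*=-i*\pl^{\nabla}$ via McDuff--Salamon; your shortcut is legitimate (for an odd, formally self-adjoint operator the two off-diagonal blocks are formal adjoints of one another), and your normalization bookkeeping ($|\bar\beta|^2=2$, $\gamma(\bar\beta)^*\gamma(\bar\beta)|_{E^+}=4$) is exactly what makes the adjoint you produce agree with that Hodge-star adjoint. A point in your favor: your $B$, which in the paper's notation sends $v+\gamma(\bar Z^*)w\mapsto v+\sqrt2\,w\,\bar Z^*$, is the correct map, whereas the paper's displayed $B$ ($v+\gamma(\bar Z^*)w\mapsto w+\sqrt2\,v\,\bar Z^*$) transposes the roles of $v$ and $w$ and, taken literally, fails the paper's own intertwining identity $B(\gamma(\bar Z^*)\cdot)=\sqrt2\,\mathfrak{e}(\bar Z^*)B(\cdot)$ --- an evident typo that your version silently corrects. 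Two minor remarks: whether $E_0$ comes out as $E^+$ or $E^-$ depends on the orientation of $(\theta^1,\theta^2)$ relative to $J$, which is immaterial for the statement; and your parenthetical ``the ranks are equal'' does not follow from the first relation alone but needs the second one, $\gamma(\bar\beta)\gamma(\beta)=-2(1-H)$, restricted to $E^-$ (giving injectivity of $\gamma(\beta)|_{E^-}$) --- you wrote both relations down, so this is a one-word fix rather than a gap.
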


\begin{proof}
Endow $E$ with the holomorphic structure induced by the Hermitian connection $\nabla$ through its $\bar{\pl}^\nabla$, this is a $C^{1+r}\cap W^{1+s,p}$ holomorphic structure. 
Both $T^{1,0}M$ and $E$ are  trivial holomorphic bundles by Proposition \ref{holotrivial}.

Choose (pointwise orthonormal) sections 
$Z \in \CI(M; T^{1,0}M),$ $\bar Z \in \CI(M; T^{0,1}M)$
and let $S,$ $T$ be real sections in $\CI(M; TM)$ such that $T = JS,$ where $J\in {\rm End}( TM)$ is the complex structure on $TM,$ and 
\begin{equation*}
	\bar Z = \tfrac12 (S + i T)
\end{equation*}
(note that $|S| = |T| = \sqrt2$).
After extending $\gamma$ to be $\bbC$-linear and defining $S^*,T^*$ the dual basis to $S,T$ and $Z^*:=S^*+iT^*$, 
$\bar{Z}^*:=S^*-iT^*$ we can write the Dirac-type operator as
\begin{equation*}
	D
	= \gamma(S^*)\nabla_{S} + \gamma(T^*)\nabla_{T}
	= \gamma(Z^*) \nabla_{Z} + \gamma(\bar Z^*)\nabla_{\bar Z}
\end{equation*}

Notice that
\begin{equation}
	\gamma(Z^*)^2 = \gamma(\bar Z^*)^2 = 0 \quad \text{ and } \quad
	\gamma(Z^*)\gamma(\bar Z^*) + \gamma(\bar Z^*)\gamma(Z^*)
	= -2,
\label{AlgRel}\end{equation}
and hence the image of $\gamma(Z^*)$ is equal to its null space, and  $\gamma(\bar{Z}^*)$ establishes an isomorphism between the image of $\gamma(Z^*)$ and its orthogonal complement (recall that 
$\gamma(\bar{Z}^*)^*=-\gamma(Z^*)$ by \eqref{compatibility}).
That is, we have
\begin{equation*}
	E = E_0 \oplus \gamma(\bar{Z}^*) E_0, \text{ with } E_0 = \gamma(Z^*) E.
\end{equation*}
The bundle $E_0$ is a complex subbundle of rank $n$ of $E$, and is trivial on $M$.
We can then define 
\begin{align*}
	B: E= E_0 \oplus   \gamma( \bar{Z}^*) E_0&\longrightarrow  E_0\oplus ( (T^{0,1}M)^* \otimes E_0), \\
	  \quad v + \gamma(\bar{Z}^*)w &\longmapsto w + \sqrt{2} v \bar Z^*
\end{align*}
and from \eqref{AlgRel} we see that
\begin{equation*}
	B(\gamma(\bar Z^*)\cdot) = \sqrt2 \mathfrak{e}(\bar Z^*) B(\cdot), \quad
	B(\gamma(Z^*) \cdot) = -\sqrt{2} \mathfrak{i}(\bar Z^*) B(\cdot)
\end{equation*}
where $\mathfrak e$ denotes the exterior product and $\mathfrak i$ denotes the interior product.
Since we also have \cite[Remark C.1.3]{McDuff-Salamon}
$(\bar\pa^{\nabla})^* = -i * \pa^{\nabla} = -\mathfrak{i}(\bar Z^*)\nabla_{Z},$
this allows us to identify
\begin{equation*}
	B( D \cdot)
	= \sqrt{2} \left(
	\bar Z^* \wedge \nabla_{\bar Z} - \mathfrak{i}(\bar Z^*) \nabla_{Z}
	\right) B(\cdot)
	= \sqrt 2 \left( \bar\pa^{\nabla} + (\bar \pa^{\nabla})^* \right) B(\cdot).
\end{equation*}
as required.
\end{proof}

It will be useful to recall the Lichnerowicz formula of Bochner-Kodaira for the Dirac-type operator associated to the CR-operator, $\sqrt{2}( \bar\pl^{\nabla} + (\bar\pl^{\nabla})^* ),$ namely \cite[Proposition 3.71]{Berline-Getzler-Vergne}
\begin{equation*}
	\bar\pl^{\nabla}(\bar\pl^{\nabla})^* + (\bar\pl^{\nabla})^*\bar\pl^{\nabla}
	= \nabla^*\nabla
	+ \sum_{j,k} \mathfrak{e}(d\bar z^j) \mathfrak{i}(dz^k) F^{E_0\otimes K^*}
	(\pa_{z_k}, \pa_{\bar z_j})
	= \nabla^*\nabla + \Omega_{\Lambda^*( T^{0,1}M)^* \otimes E_0}.
\end{equation*}
where $F^{E_0\otimes K^*}$ is the twisting curvature of the connection, which here reduces to 
the curvature  $2$-form, $\Omega_{\Lambda^*( T^{0,1}M)^* \otimes E_0}.$
The latter equality holds as we are working on a surface, which also means that
\begin{equation*}
	\bar\pl^{\nabla}(\bar\pl^{\nabla})^* + (\bar\pl^{\nabla})^*\bar\pl^{\nabla}
	= \begin{cases}
	(\bar\pl^{\nabla})^*\bar\pl^{\nabla} & \text{ on } E_0 \\
	\bar\pl^{\nabla}(\bar\pl^{\nabla})^* & \text{ on } E_0 \otimes T^*_{0,1}M
	\end{cases}
\end{equation*}

Since $E \to M$ is holomorphically trivial, Proposition \ref{holotrivial} gives us a bundle isomorphism
$F = (f_1, \ldots, f_{2n}): E\to M \times \bbC^{2n} $ such that $F \bar\pl^{\nabla} = \bar\pl F.$ Taking adjoints,
\begin{equation*}
	(\bar \pl^{\nabla})^* F^* = F^* (\bar \pl)^*,
\end{equation*}
and hence the Bochner-Kodaira formula yields
\begin{equation}
	\nabla^*\nabla
	= - (\bar\pl^{\nabla})^*\bar\pl^{\nabla} - \Omega
	= - F^* (\bar \pl)^* (F^*)^{-1} 
	F^{-1} \bar\pl F
	- \Omega
\label{FactForm}\end{equation}
over $E_0$ and a similar formula over $(T^{0,1}M)^* \otimes E_0.$
This factorization formula will be useful for our considerations below.

\end{section}
\section{Identification for Dirac systems}

In this section we will prove Theorem \ref{dirac ident}: the Cauchy data of a Dirac operator plus potential on a surface with boundary identifies the operator up to a unitary endomorphism equal to the identity at the boundary.

In the previous section, the bundle $E_0$ is a complex subbundle of rank $n$ on $M$, it is then trivial and can be identified with $\underline{\cc}^n :=M\x \cc^n$. Moreover, by Lemma \ref{to dbar}, we may assume that the Dirac bundle is  a $\bar{\pl}$ system on a trivial $\cc^{2n}$  bundle, so in this section we will study operators of the form
\begin{equation}\label{D+V}
	D+V 
	:= \begin{pmatrix}
	0 & \bar\pl^* \\ \bar\pl & 0
	\end{pmatrix}+ \begin{pmatrix}
	Q^+ & A'^* \\ A & Q^-
	\end{pmatrix} 
\end{equation}
acting as a bounded operator from $H^1(M,E)$ to $L^2(M,E)$  where $E:= \underline{\cc}^n\oplus (\underline{\cc}^n\otimes (T^{0,1}M)^*)$, 
\begin{equation}\label{assump}
\begin{gathered}
	A, A' \in C^r\cap W^{s,p}(M,{\rm End}(\cc^n)\otimes (T^{0,1}M)^*), 
	\text{ with } r,s,p \text{ as in \eqref{HiTayReg},} \\
	 \textrm{ and }ÊQ^\pm \in W^{1,q}(M) , \, q>2.
\end{gathered}
\end{equation}
Notice that by Sobolev embedding, $Q^{\pm} \in L^{\infty}(M)$.

First consider the case where $A = A'=0,$ so that we want to determine $Q^{\pm}$ from the Cauchy data.
The proof of \cite[Lemma 7.4]{GTgafa} applies {\em verbatim} to a matrix valued potential and shows that
\begin{equation}
	Q^{\pm}\rest{\pl M} \text{ are determined by } \mathcal C_{D+V};
\label{BdyDetQ}\end{equation}
in \S\ref{sec:IdPot} we will show that in fact $\mathcal C_{D+V}$ determines 
$Q^{\pm}$ on all of $M.$
We do this, following the arguments in \cite{Bu} and \cite{GTgafa}, by 
showing that for a dense set of points $z \in M$ one can find a solution
of `complex geometric optics' (or CGO) type that determines the potential
at that point. These solutions are constructed in \S\ref{sec:CGO} and then in 
\S\ref{sec:RedDiag} we show that we can reduce the general case to the case $A = A'=0.$

\subsection{Solutions from complex geometric optics}\label{sec:CGO}
In this section, we use the method of \cite{GTgafa}, based on the work of Bughkeim \cite{Bu} to construct elements in the null space of $D+V$ of CGO-type (where $V$ is as in \eqref{assump} with $A=A'=0$).
These solutions will have Morse holomorphic or anti-holomorphic phases, so we start by recalling the following Proposition, proved in \cite[Prop 2.1]{GTAust}: 
\begin{prop}\label{denseset}
On a Riemann surface with boundary, there exists a dense set of points $p$ in $M$ such that there exists a holomorphic function $\Phi$ which is Morse, i.e. $d\Phi$ has only zeros of order $1$, and with a critical point at $p$.
\end{prop}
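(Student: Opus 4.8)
The plan is to realize holomorphic functions on $M$ as restrictions of meromorphic functions on a fixed closed Riemann surface, use Riemann--Roch to produce enough of them to prescribe low-order jets, and then run a transversality argument to force the Morse condition while retaining control of the critical points.

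First I would embed $M$ in a closed Riemann surface by taking the double $\hat M = M\cup_{\pl M}\bar M$ and fixing a point $q$ in the interior of the mirror copy $\bar M$, so that $q\notin M$. For $k$ large, Riemann--Roch gives $\dim L(kq)=k-\hat g+1$, where $L(kq)$ is the space of meromorphic functions on $\hat M$ with pole of order at most $k$ at $q$ and $\hat g$ is the genus of $\hat M$; every such function is holomorphic on $\hat M\setminus\{q\}\supset M$ and smooth up to $\pl M$, so it restricts to a holomorphic function on $M$. The essential quantitative input is that for $k$ large the $2$-jet evaluation map from $L(kq)$ into the space of $2$-jets at an interior point $p$ (of complex dimension $3$) is surjective, since its kernel is $L(kq-3p)$, whose dimension is $\dim L(kq)-3$ again by Riemann--Roch. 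Thus inside this finite-dimensional family one can freely prescribe $\Phi(p)$, $d\Phi(p)$, and the Hessian of $\Phi$ at $p$.

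I would then carry out two genericity steps. For \emph{existence of a critical point at $p$}: surjectivity of the jet map shows the functional $f\mapsto d\Phi(p)\in (T^{1,0}_pM)^*\cong\cc$ has a large kernel, so there is a nonconstant $\Phi\in L(kq)$ with $d\Phi(p)=0$, i.e. $\Phi'(p)=0$ in a holomorphic coordinate. For \emph{density of the Morse condition}: in a local coordinate $z$, $\Phi$ is Morse exactly when every zero of $\Phi'$ is simple, i.e. $\Phi'(z)=0$ forces $\Phi''(z)\neq 0$. Considering the map $(z,f)\mapsto(\pl_z f(z),\pl_z^2 f(z))$ on $M\times L(kq)$, the surjectivity of the jet map makes this submersive in the relevant directions, so by the parametric transversality (Thom--Sard) theorem the set of $f\in L(kq)$ whose zero set of $\Phi'$ meets the zero set of $\Phi''$ has measure zero, and its complement is open and dense. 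Hence Morse holomorphic functions are dense.

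Combining the two: fix an interior point $p$ and an arbitrary neighborhood $U\ni p$, and produce a nonconstant holomorphic $\Phi$ with a critical point at $p$. Even if this critical point is degenerate, it is a zero of $\Phi'$ of some finite order $m\geq 1$, so by the argument principle applied to $\Phi'$ on a small coordinate disc around $p$, any sufficiently small perturbation of $\Phi$ still has $m$ critical points (with multiplicity) in that disc. Choosing the perturbation within the dense set of Morse functions yields a Morse holomorphic function with a nondegenerate critical point inside $U$; since $U$ and $p$ were arbitrary, the set of critical points of Morse holomorphic functions is dense in $M$. The main obstacle is precisely the tension between the codimension-two condition of having a critical point at $p$ and the global Morse condition, whose enforcing perturbation may move the critical point off $p$; the argument-principle stability step is the crux, since it guarantees a critical point survives \emph{near} $p$ after perturbation, which is exactly why the statement asserts density of admissible $p$ rather than that every $p$ works. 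Verifying that the Riemann--Roch jet map is genuinely surjective for large $k$, so that the transversality family is rich enough, is the other technical point to be checked with care.
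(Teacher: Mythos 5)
Your proposal is correct and takes essentially the same route as the actual proof: the paper does not prove Proposition \ref{denseset} internally but quotes it from \cite{GTAust}, where the argument likewise combines surjectivity of the $2$-jet evaluation map (obtained via Riemann--Roch with poles at points away from $M$ on the double), a Sard/transversality argument showing Morse holomorphic functions are generic, and a Rouch\'e/argument-principle stability step converting ``a (possibly degenerate) critical point at $p$'' into density of points admitting Morse functions with a nondegenerate critical point nearby. The only real difference is organizational: the cited proof runs the genericity argument in a Banach space of holomorphic functions with jet surjectivity proved separately, whereas you reduce from the start to the finite-dimensional space $L(kq)$ on the Schottky double, which makes the parametric transversality step elementary while buying the same conclusion.
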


Let $\Phi$ be a holomorphic Morse function and $h>0$ a small parameter.
For certain choices of  
\begin{equation} \label{abCond}
	a \in \CI(M; \underline{\bbC}^n), \quad \bar\pl a =0, \quad
	b \in \CI(M; T^*_{0,1}M \otimes \underline{\bbC}^n), \quad \bar\pl^*b=0
\end{equation}
we will show that one can find $r_h,$ $s_h$ with small $L^p$ norms as $h\to 0$
so that 
\[ U_h=\begin{pmatrix}  e^{\Phi/h}(a+r_h)  \\ e^{\overline\Phi/h}(b+s_h)  \end{pmatrix}\]
is in the null space of $D+V.$

Since $\bar\pl \Phi = 0$ and $\bar \pl^*\overline\Phi =0,$ solving $(D+V)U_h=0$ for $r_h,$ $s_h$ is equivalent to
\begin{equation*}
	 (D+V_{\psi})
	\begin{pmatrix}  a+r_h  \\ b+s_h \end{pmatrix}
	= {0 \choose 0}
\end{equation*}
where $\psi:={\rm Im}(\Phi)$ and 
\begin{equation*}
	V_\psi:= 
	\begin{pmatrix} e^{-\overline\Phi/h} & 0 \\ 0 & e^{-\Phi/h} \end{pmatrix} V
	\begin{pmatrix} e^{\Phi/h} & 0 \\ 0 & e^{\overline\Phi/h} \end{pmatrix}
	= \left(\begin{matrix}
		e^{\frac{2i\psi}{h}}Q^+ & 0\\
		0 & e^{-\frac{2i\psi}{h}}Q^-
	\end{matrix}\right),
\end{equation*}
and hence to
\begin{equation}\label{eqtosolve}
	(D+V_\psi)\begin{pmatrix}  r_h  \\ s_h \end{pmatrix}=-V_\psi{a \choose b}.
\end{equation}

Next we make use of a right inverse for $D$ as constructed in Proposition 2.1 and Lemma 2.1 of the paper 
\cite{GTgafa}. 
\begin{prop}\label{inverse}
There exists an operator $D^{-1}:L^q(M,E)\to W^{1,q}(M,E)$, bounded for all $q\in (1,\infty)$, such that  
$DD^{-1}={\rm Id}$.
\end{prop}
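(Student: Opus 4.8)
The plan is to exploit the block structure of $D$ to reduce the statement to the construction of bounded right inverses for the scalar operators $\bar\pl$ and $\bar\pl^*$, and then to build these by embedding $M$ into a closed Riemann surface and using an explicit Cauchy-type integral kernel, exactly as in \cite[Prop.~2.1, Lem.~2.1]{GTgafa}.

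First I would observe that $D=\begin{pmatrix}0 & \bar\pl^* \\ \bar\pl & 0\end{pmatrix}$ is block off-diagonal and acts diagonally on the trivial bundle $\underline{\cc}^n$, so it suffices to produce operators $R_{\bar\pl}$ and $R_{\bar\pl^*}$ with $\bar\pl R_{\bar\pl}=\mathrm{Id}$ on $(0,1)$-forms and $\bar\pl^* R_{\bar\pl^*}=\mathrm{Id}$ on functions, each bounded from $L^q$ to $W^{1,q}$ for every $q\in(1,\infty)$; one then sets $D^{-1}=\begin{pmatrix}0 & R_{\bar\pl} \\ R_{\bar\pl^*} & 0\end{pmatrix}$ and checks $DD^{-1}=\mathrm{Id}$ by a direct block computation. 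Since $\bar\pl^*$ is, up to the Hodge star, the operator $\pl$, i.e.\ a Cauchy--Riemann-type operator for the opposite orientation, the construction of $R_{\bar\pl^*}$ follows from that of $R_{\bar\pl}$ by complex conjugation, so everything reduces to a single scalar right inverse for $\bar\pl$.

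To build $R_{\bar\pl}$, I would embed $M$ as a proper subdomain of a closed Riemann surface $X$ (cap off the boundary circles, or pass to the Schottky double), fix an auxiliary metric, and let $G$ be the Green's operator of the corresponding Laplacian. Differentiating $G$ produces a Cauchy-type kernel $C(x,y)$, a $(1,0)$-form in $y$ with a simple pole $\sim dy/(y-x)$ along the diagonal, and the associated integral operator $\omega\mapsto\int_X C(\cdot,y)\wedge\tilde\omega(y)$, where $\tilde\omega$ denotes extension of $\omega$ by zero, is the natural candidate. Its boundedness $L^q\to W^{1,q}$ for all $q\in(1,\infty)$ is a Calder\'on--Zygmund statement: the principal term is the Cauchy transform and its $z$-derivative is the Ahlfors--Beurling transform, bounded on $L^q$ for $1<q<\infty$; crucially the kernel does not depend on $q$, so the \emph{same} operator works on the whole scale.

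The main obstacle is obtaining an \emph{exact} right inverse rather than a parametrix modulo a compact error, because on the closed surface $X$ the operator $\bar\pl$ has a nontrivial finite-dimensional cokernel spanned by the antiholomorphic $(0,1)$-forms $\mathcal H^{0,1}$, so the zero-extension $\tilde\omega$ need not lie in its range. Here the nonempty boundary is essential: since the forms in $\mathcal H^{0,1}$ are real-analytic and cannot vanish identically on the open set $X\setminus\overline M$, I can correct $\tilde\omega$ by a form supported in $X\setminus\overline M$ so that it becomes orthogonal to $\mathcal H^{0,1}$ and hence lies in the range of $\bar\pl$ on $X$; solving there by $G$ and restricting to $M$ yields $\bar\pl R_{\bar\pl}\omega=\omega$ on $M$, the correction being invisible over $M$. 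Elliptic regularity on $X$ upgrades the solution to $W^{1,q}$ and restriction to $M$ preserves the bound. This is precisely the content of the scalar result of \cite{GTgafa}, which the reduction above transports \emph{verbatim} to the present vector-valued setting.
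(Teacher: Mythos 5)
Your proposal is correct, and its skeleton coincides with the paper's: the paper's proof is precisely the observation that on the trivial bundle $D$ splits as a direct sum of line-bundle operators, followed by a citation of \cite[Prop.~2.1, Lemma~2.1]{GTgafa} for the scalar right inverses, recorded in the form $D^{-1}=\begin{pmatrix} 0 & \mc{R}\bar{\pl}^{-1}\mc{E} \\ \mc{R}(\bar{\pl}^{*})^{-1}\mc{E} & 0 \end{pmatrix}$. Where you genuinely diverge is in the scalar step, which you prove rather than cite, and by a different device: the paper (following \cite{GTgafa}) extends sections from $M$ to an \emph{open} surface $\til{M}\supset M$, where $\bar{\pl}$ is surjective, so no cokernel ever appears and the right inverse is built directly from a Cauchy kernel there; you instead cap off to a \emph{closed} surface $X$, where $\bar{\pl}$ acquires the cokernel $\mathcal{H}^{0,1}$ of antiholomorphic $(0,1)$-forms, and kill it by a finite-rank correction supported in $X\setminus\overline{M}$. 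Your correction argument is sound — nonzero antiholomorphic forms have discrete zero sets, so restriction to the nonempty open complement is injective, the Gram matrix $\bigl(\int_{X\setminus\overline{M}}\langle \eta_i,\eta_j\rangle\bigr)$ is invertible, and the correction depends boundedly and linearly on $\omega$ while being invisible over $M$; solving with $u=\bar{\pl}^*G$ works because $\Delta=\bar{\pl}\bar{\pl}^*$ on $(0,1)$-forms on a surface. What each route buys: yours is self-contained and makes the $q$-independence of the kernel and the Calder\'on--Zygmund/Beurling boundedness explicit; the paper's open-surface route avoids all cohomological bookkeeping and, more importantly, packages the inverse with a genuine Sobolev extension operator $\mc{E}$ bounded on $W^{k,q}$ for $k\in\{0,1\}$. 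That last point is the one caveat worth recording: for Proposition \ref{inverse} as stated your extension by zero of $L^q$ data is perfectly adequate, but the same operators $\mc{R}\bar{\pl}^{-1}\mc{E}$, $\mc{R}(\bar{\pl}^*)^{-1}\mc{E}$ are reused in Lemma \ref{estimate1}, where $\mc{E}$ is applied to $W^{1,q}$ sections and one integrates by parts to gain powers of $h$; a zero-extension does not preserve $W^{1,q}$, so if your version of $D^{-1}$ is to feed into the CGO estimates you would need to replace it by (or supplement it with) a bounded $W^{1,q}$ extension, exactly as in \cite{GTgafa}.
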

\begin{proof}
The proof in \cite{GTgafa} is written for the case of a line bundle, i.e. $n=1$,  but since here the bundle is the trivial $\cc^n$ bundle, the operator $D$ splits as a direct sum of operators on complex line bundles and the proof is then contained in \cite{GTgafa}. The operator $D^{-1}$ is of the form 
\begin{equation}\label{DInv}
	D^{-1}=\begin{pmatrix}
	0 & \mc{R}\bar\pl^{-1}\mc{E} \\  \mc{R}(\bar\pl^{*})^{-1}\mc{E} & 0 
	\end{pmatrix}
\end{equation}
for some operators $\bar\pl^{-1}$ and $(\bar\pl^{*})^{-1}$ inverting on the right the operators $\bar\pl$ 
and $(\bar\pl^{*})^{-1}$ on an open manifold $\til{M}$ which contains $M$, $\mc{E}$ is an extension operator extending $W^{k,q}(M)$ sections on $M$ to compactly supported sections in $W^{k,q}(\til{M})$  
(for $k\in\{0,1\},$ $q\in [1,\infty]$)
and $\mc{R}$ is a restriction operator, restricting sections in $L^{q}(\til{M})$ to sections in $L^{q}(M)$, see Section 2 of \cite{GTgafa} for more details (where the notation for $M\subset \til{M}$ was $M_0\subset M$).  
\end{proof}

Let $\psi$ be a real valued smooth Morse function on $\til{M}$ and let 
\begin{equation*}
	\bar{\pl}^{-1}_\psi:=\mc{R}\bar{\pl}^{-1}e^{-\frac{2i\psi}{h}}\mc{E}, \qquad
	(\bar{\pl}_\psi^*)^{-1}:=\mc{R}(\bar{\pl}^*)^{-1}e^{2i\psi/h}\mc{E}   
\end{equation*}
with notation as in \eqref{DInv}. From \cite[Lemma 2.2 and 2.3]{GTgafa}, we have following estimates 
\begin{lemma}\label{estimate1}
For any $q>2$, there exists $\eps>0$ and $C>0$ such that for all 
$\omega\in W^{1,q}(M,\underline{\bbC}^n),$ 
$\omega'\in W^{1,q}(M,\underline{\bbC}^n\otimes (T^{0,1}M)^*),$
and $h>0$ small
\begin{equation*}
	||(\bar{\pl}_\psi^*)^{-1}\omega||_{L^2}\leq Ch^{\demi+\eps}||\omega||_{W^{1,q}}, \quad
	||\bar{\pl}^{-1}_\psi \omega'||_{L^2}\leq Ch^{\demi+\eps}||\omega'||_{W^{1,q}}.
\end{equation*}
Also, for
$v\in W^{1,q}(M,\underline{\bbC}^n)$ and $v'\in W^{1,q}(M,\underline{\bbC}^n\otimes (T^{0,1}M)^*)$ satisfying $v|_{\pl M}=0$ and $v'|_{\pl M}=0,$ we have 
\begin{equation}\label{estadj}
\begin{gathered}
	||\mc{E}^* (\bar{\pl}^{-1})^*\mc{R}^*(e^{-2i\psi/h}v)||_{L^2}
	\leq Ch^{\demi+\eps}||v||_{W^{1,q}},\\
	||\mc{E}^* ((\bar{\pl}^*)^{-1})^*\mc{R}^*(e^{-2i\psi/h}v')||_{L^2}
	\leq Ch^{\demi+\eps}||v'||_{W^{1,q}}
\end{gathered}
\end{equation} 
\end{lemma}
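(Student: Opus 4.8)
The plan is to reduce everything to a single scalar oscillatory-integral estimate and then extract the gain in $h$ from stationary phase. Since the bundle is the trivial $\underline{\cc}^n$, the right inverses $\bar\pl^{-1}$ and $(\bar\pl^*)^{-1}$ act componentwise, so it suffices to treat $n=1$ and establish, for the scalar Cauchy-type operators on $\til M$, a bound of the form $\|\mc{R}\,\bar\pl^{-1}(e^{-2i\psi/h}\mc{E}\omega)\|_{L^2}\leq Ch^{\demi+\eps}\|\omega\|_{W^{1,q}}$ together with its $(\bar\pl^*)^{-1}$ analogue; this is exactly the content of \cite[Lemmas 2.2, 2.3]{GTgafa}. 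In a local holomorphic coordinate the operator $\bar\pl^{-1}$ is, up to a smooth error, convolution against the Cauchy kernel $1/(\pi z)$, so after inserting the phase we are looking at an oscillatory integral operator with smooth real phase $\psi=\im\Phi$.

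First I would exploit that $\Phi$ is Morse: because $\psi$ is the imaginary part of a holomorphic Morse function, its critical points are isolated and nondegenerate. Using a partition of unity I would split the integral into a region bounded away from $\mathrm{Crit}(\psi)$ and small neighborhoods of the (finitely many) critical points. On the non-stationary region $|d\psi|\geq c>0$, so repeated integration by parts against the first-order operator $h(2i|d\psi|^2)^{-1}\langle\nabla\psi,\nabla\cdot\rangle$ produces an arbitrarily large power of $h$; the only cost is derivatives falling on $\omega$ and on the smooth kernel, which are controlled by $\|\omega\|_{W^{1,q}}$. This contribution is therefore $O(h^N)$ for every $N$ and is negligible.

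The genuine contribution comes from the critical points. Near each one I would apply the Morse lemma to bring $\psi$ into the normal form $\psi(x)=\psi(p)\pm\tfrac12(x_1^2+x_2^2)$ and then estimate the localized Cauchy transform against $e^{\pm i(x_1^2+x_2^2)/h}$. Two real dimensions of genuine stationary phase yield the leading factor $h^{\demi}$ in the $L^2$ operator norm; the extra $\eps>0$ is bought from the surplus regularity $\omega\in W^{1,q}$ with $q>2$, by interpolating the crude $L^2\to L^2$ bound of the Cauchy transform against the sharper decay obtained when one derivative and the Sobolev modulus of continuity of $\omega$ are fed into the stationary-phase remainder. I expect this step---pinning down the precise rate $h^{\demi+\eps}$ rather than the bare $h^{\demi}$, and tracking how the remainder depends on $\|\omega\|_{W^{1,q}}$---to be the main obstacle, since it is precisely where the low regularity of the data interacts with the oscillation.

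Finally, the adjoint estimates \eqref{estadj} I would obtain by duality. Writing out $\mc{E}^*(\bar\pl^{-1})^*\mc{R}^*$ shows that it is an oscillatory operator of the same Cauchy-kernel type, with the roles of $\mc{E}$ and $\mc{R}$ interchanged and the conjugate phase; the hypotheses $v|_{\pl M}=0$ and $v'|_{\pl M}=0$ guarantee that extension by zero keeps $v,v'$ in $W^{1,q}(\til M)$ with no boundary jump, so the same stationary-phase analysis applies verbatim and delivers the identical $h^{\demi+\eps}$ gain.
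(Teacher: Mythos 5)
Your reduction to the scalar case via the componentwise action of $D^{-1}$ on the trivial bundle $\underline{\bbC}^n$, followed by the appeal to \cite[Lemmas 2.2 and 2.3]{GTgafa}, is precisely the paper's proof, which consists of exactly this citation (the splitting into line bundles having already been noted in the proof of Proposition \ref{inverse}). Your additional stationary-phase sketch of the cited scalar estimates is not needed for the statement and contains slips worth flagging --- $\psi={\rm Im}(\Phi)$ is harmonic, so its nondegenerate critical points are saddles with normal form $x_1^2-x_2^2$ rather than $\pm\tfrac12(x_1^2+x_2^2)$; with $\omega$ only in $W^{1,q}$ you cannot iterate the integration by parts, so the nonstationary region yields a single factor of $h$ (sufficient here), not $O(h^N)$; and the Cauchy kernel is singular on the diagonal, not smooth, so derivatives falling on it produce the non-integrable $|z-w|^{-2}$ and the interaction of this singularity with the oscillation is the actual technical heart of \cite{GTgafa} --- but none of this affects the correctness of your citation-level argument, which matches the paper.
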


Notice that, with
\begin{equation*}
	D_{\psi}^{-1} = \begin{pmatrix}
	0 & \bar\pl_{\psi}^{-1} \\
	(\bar\pl_{\psi}^*)^{-1} & 0 
	\end{pmatrix},
\end{equation*}
we have $D^{-1}V_{\psi} = D_{\psi}^{-1}V.$
Hence applying $D^{-1}$ to both sides of \eqref{eqtosolve} yields
\begin{equation*}
	(\mathrm{Id}+D_{\psi}^{-1}V)\begin{pmatrix}  r_h  \\ s_h \end{pmatrix}
	=-D_{\psi}^{-1}V{a \choose b}.
\end{equation*}
and hence
\begin{equation}\label{s-r equation}
	\left\{\begin{array}{ll}
	r_h + \bar{\pl}_\psi^{-1}(Q^-s_h) = -\bar{\pl}^{-1}_\psi (Q^-b)\\
	s_h +(\bar{\pl}_\psi^*)^{-1}(Q^+r_h) = -(\bar{\pl}_\psi^*)^{-1}(Q^+a)
	\end{array}\right.
\end{equation}

Next we specialize to $a=0$ and find that $r_h$ satisfies
\begin{eqnarray}\label{solve for r}
	(I - S_h) r_h = -\bar{\pl}_\psi^{-1}(Q^- b) \,\,\, 
	\textrm{ with }S_h:= \bar{\pl}_\psi^{-1}Q^-\bar{\pl}_\psi^{*-1}Q^+ .
\end{eqnarray}
where $Q^+,Q^-$ are viewed as multiplication operators. 
Similarly to Lemma 3.1 in \cite{GTgafa}, we have the following consequence of Lemma \ref{estimate1}:
\begin{lemma}\label{normestim}
Let $q>2$ and assume that $Q^+\in L^\infty(M,{\rm End}({\bbC}^n))$ and $Q^-\in W^{1,q}(M,{\rm End}({\bbC}^n))$, 
then $S_h$ is bounded on $L^r(M)$ for any $1<r\leq q$ and satisfies $||S_h||_{L^r\to L^r}=\mc{O}(h^{1/r})$ if 
$r>2$ and $||S_h||_{L^2\to L^2}=\mc{O}(h^{1/2-\eps})$ for any $0<\eps<1/2$ small.
\end{lemma}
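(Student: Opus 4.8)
The plan is to read $S_h=\bar\pl_\psi^{-1}\,Q^-\,(\bar\pl_\psi^*)^{-1}\,Q^+$ as a composition of the multiplication operators $Q^\pm$ with the oscillatory right inverses of Proposition \ref{inverse}, and to extract the $h$-decay from the smoothing of these inverses combined with the oscillatory gain of Lemma \ref{estimate1}. First I would record the uniform boundedness. Since $|e^{\pm 2i\psi/h}|=1$, the operators $\bar\pl_\psi^{-1}$ and $(\bar\pl_\psi^*)^{-1}$ are bounded from $L^t$ to $W^{1,t}$ for every $t\in(1,\infty)$ with norm independent of $h$, by Proposition \ref{inverse}; composing with multiplication by $Q^+\in L^\infty$ and $Q^-\in L^\infty$ shows that $S_h$ is bounded on $L^r$ for all $1<r<\infty$, uniformly in $h$. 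This already gives the boundedness statement for $1<r\le q$ (and the decay-free estimates used below).

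The decay rests on one structural observation: the inner inverse raises regularity by one derivative, so that multiplying by the $W^{1,q}$ matrix $Q^-$ produces an argument for the outer inverse to which Lemma \ref{estimate1} applies. Concretely, for $f\in L^r$ set $g:=(\bar\pl_\psi^*)^{-1}(Q^+f)$; then $g\in W^{1,r}$ with $\|g\|_{W^{1,r}}\le C\|f\|_{L^r}$. When $2<r\le q$, Sobolev embedding gives $Q^-,g\in L^\infty$, and the Leibniz rule together with $\nabla Q^-\in L^q\subset L^r$ yields $Q^-g\in W^{1,r}$ with $\|Q^-g\|_{W^{1,r}}\le C\|f\|_{L^r}$. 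Applying Lemma \ref{estimate1} to the outer operator then gives the decaying bound $\|S_hf\|_{L^2}=\|\bar\pl_\psi^{-1}(Q^-g)\|_{L^2}\le Ch^{\demi+\eps}\|f\|_{L^r}$, while $\bar\pl_\psi^{-1}(Q^-g)\in W^{1,r}\hookrightarrow L^\infty$ gives the decay-free bound $\|S_hf\|_{L^\infty}\le C\|f\|_{L^r}$. Log-convexity of the $L^p$ norms, $\|S_hf\|_{L^r}\le\|S_hf\|_{L^2}^{2/r}\|S_hf\|_{L^\infty}^{1-2/r}$, then produces $\|S_h\|_{L^r\to L^r}=\mc{O}(h^{1/r})$.

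For the endpoint $r=2$ this argument breaks down: since $f$ carries no regularity, $g$ lies only in $H^1$, and the term $Q^-\nabla g$ in $\nabla(Q^-g)$ is merely $L^2$, so $Q^-g\in H^1=W^{1,2}$ sits exactly at the threshold where the $W^{1,q}$, $q>2$, form of Lemma \ref{estimate1} (with its full gain $h^{\demi+\eps}$) is no longer available. I would avoid the endpoint by interpolation. Running the scheme of the previous paragraph on the adjoint $S_h^*=(Q^+)^*((\bar\pl_\psi^*)^{-1})^*(Q^-)^*(\bar\pl_\psi^{-1})^*$, now using the adjoint oscillatory estimates \eqref{estadj} for the inner factor, gives $\|S_h^*\|_{L^r\to L^r}=\mc{O}(h^{1/r})$ for $2<r\le q$, hence by duality $\|S_h\|_{L^{r'}\to L^{r'}}=\mc{O}(h^{1/r})$ for the conjugate exponent $r'<2$. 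Riesz--Thorin interpolation between $L^r$ and $L^{r'}$ places $L^2$ at $\theta=\demi$ and yields $\|S_h\|_{L^2\to L^2}=\mc{O}(h^{1/r})$; letting $r\downarrow 2$ gives the claimed $\mc{O}(h^{\demi-\eps})$ for every small $\eps>0$.

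The main obstacle is this $L^2$ endpoint. The substance is not the interpolation itself but verifying that the adjoint estimates \eqref{estadj} genuinely apply in the dual scheme: those estimates require the functions to which $(\bar\pl^{-1})^*$ and $((\bar\pl^*)^{-1})^*$ are applied to vanish on $\pl M$, which is not automatic after multiplying by $Q^\pm$. Controlling these boundary contributions — either by a cutoff or extension argument or by absorbing them into lower order terms — is the step that requires care; everything else is bookkeeping with Sobolev embeddings and the uniform bounds from Proposition \ref{inverse}.
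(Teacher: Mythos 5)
Your first two paragraphs coincide with the paper's own argument (the lemma is proved by deferring to Lemma 3.1 of \cite{GTgafa}, whose proof is exactly this scheme): uniform boundedness from Proposition \ref{inverse} and $|e^{\pm 2i\psi/h}|=1$; for $2<r\leq q$, the inner inverse gives $g\in W^{1,r}$, the Leibniz rule with $\nabla Q^-\in L^q\subset L^r$ keeps $Q^-g\in W^{1,r}$, Lemma \ref{estimate1} gives the $L^2$ decay $h^{\demi+\eps}$, the $L^\infty$ bound is free, and log-convexity yields $\mc{O}(h^{1/r})$. The genuine gap is at the $L^2$ endpoint. Your duality route needs $\|S_h^*\|_{L^r\to L^r}=\mc{O}(h^{1/r})$ for $r$ close to $2$, which you propose to extract from the adjoint estimates \eqref{estadj}; but those require the argument to vanish on $\pl M$, and in your scheme that argument is essentially $(Q^-)^*$ times the smoothed inner output, which has no reason to vanish there: Lemma \ref{normestim} makes no hypothesis $Q^\pm|_{\pl M}=0$ (such vanishing only becomes available later, in Proposition \ref{idpotential}, where the $Q^\pm$ are differences of potentials with equal boundary traces). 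You flag this as ``the step that requires care,'' but the repairs you gesture at do not deliver the rate: cutting off with $\chi_\delta$ supported away from a $\delta$-collar costs $\|\chi_\delta v'\|_{W^{1,q}}=\mc{O}(\delta^{-1})$ while the boundary-layer remainder is only $\mc{O}(\delta^{1/2})$ in $L^2$, and optimizing $\delta$ gives roughly $\mc{O}(h^{1/6})$, far short of $h^{\demi-\eps}$. There is also a structural mismatch: in $(\bar{\pl}_\psi^{-1})^*=\mc{E}^*e^{2i\psi/h}(\bar{\pl}^{-1})^*\mc{R}^*$ the oscillation sits between $\mc{E}^*$ and the kernel operator and does not commute past $\mc{E}^*$ (which is not a restriction), so \eqref{estadj} does not apply verbatim even granting the vanishing.

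The irony is that the endpoint is free from what you already established in your first paragraph. Since $S_h$ is uniformly bounded on $L^{r_0}$ for every $r_0\in(1,2)$ as well, Riesz--Thorin interpolation between $\|S_h\|_{L^{r_0}\to L^{r_0}}=\mc{O}(1)$ and $\|S_h\|_{L^{r}\to L^{r}}=\mc{O}(h^{1/r})$ for $2<r\leq q$ reaches $L^2$ with exponent $\theta/r$, where $\frac{1}{2}=\frac{1-\theta}{r_0}+\frac{\theta}{r}$; letting $r_0\downarrow 1$ and $r\downarrow 2$ sends $\theta/r\to \frac12$, giving $\|S_h\|_{L^2\to L^2}=\mc{O}(h^{\demi-\eps})$ for every small $\eps>0$. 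This is the route of \cite{GTgafa}, it uses no adjoint estimates and no boundary vanishing, and it explains both the otherwise curious inclusion of the range $1<r\leq q$ in the statement and the peculiar form ``$h^{1/2-\eps}$ for any small $\eps$'' of the endpoint bound. Replacing your third paragraph by this interpolation closes the gap; the rest of your write-up stands.
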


Using Lemma \ref{normestim} equation \eqref{solve for r} can be solved, for small enough $h,$ through a Neumann series
\begin{equation}\label{r neumann}
	r_h := -\sum\limits_{j = 0}^\infty S_h^j\bar{\pl}_\psi^{-1}Q^- b
\end{equation}
which defines an element of $L^q(M)$ for any $q\geq 2.$
Substituting this expression for $r_h$ into equation \eqref{s-r equation} when $a=0$, we get that
\begin{equation}\label{shnorm}
	s_h = -(\bar{\pl}_\psi^*)^{-1}Q^+r_h.
\end{equation}
But using Lemma \ref{normestim} and \ref{estimate1} for $s_h$ and $r_h$, we deduce that if 
$Q^+\in L^\infty(M,{\rm End}({\bbC}^n))$ and $Q^-\in W^{1,q}(M,{\rm End}({\bbC}^n))$ for some $q>2$, then there exists $\eps >0$ such that
\[\|s_h\|_{L^2(M_0)} + \|r_h\|_{L^{2}(M_0)}=\mc{O}(h^{\demi+\eps}).\]

Similarly, if one assumes that 
$Q^-\in L^\infty(M,{\rm End}({\bbC}^n))$ and $Q^+\in W^{1,q}(M,{\rm End}({\bbC}^n))$ for some $q>2$, then
one can solve the system \eqref{s-r equation}, with $b=0,$ for any holomorphic $a.$
In summary:
\begin{proposition} \label{first order CGO a = 0}
Let $\Phi = \phi + i\psi$ be a Morse holomorphic function on $M$, and 
$b$ an anti-holomorphic section of $\underline{\bbC}^n\otimes \Lambda^{0,1}(M)$. 
If $Q^+\in L^\infty (M,{\rm End}({\bbC}^n))$ and 
$Q^-\in W^{1,q}(M,{\rm End}({\bbC}^n))$ for some $q>2$, 
there exist solutions to $(D+V)F_h= 0$ on $M$ of the form
\begin{equation}\label{Fh} 
	F_h = \begin{pmatrix}
		e^{\Phi/h} r_h \\
		e^{\bar{\Phi}/h} (b + s_h)
	\end{pmatrix}
\end{equation}
where $\|s_h\|_{L^2} + \|r_h\|_{L^{2}}=\mc{O}(h^{\demi+\eps})$ for some $\eps>0.$

If instead
$Q^-\in L^\infty(M,{\rm End}({\bbC}^n))$ and $Q^+\in W^{1,q}(M,{\rm End}({\bbC}^n))$ for some $q>2$, 
then for any 
holomorphic section of $\underline{\bbC}^n,$ $a,$ 
there exist solutions to $(D+V)G_h= 0$ on $M$ of the form
\begin{equation} \label{Gh}
	G_h = \begin{pmatrix}
		e^{\Phi/h} (a+r_h) \\
		e^{\bar{\Phi}/h} s_h
	\end{pmatrix}
\end{equation}
where $\|s_h\|_{L^2} + \|r_h\|_{L^{2}}=\mc{O}(h^{\demi+\eps})$ for some $\eps>0.$ 
\end{proposition}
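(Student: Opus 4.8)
The plan is to prove the first statement in detail, since the second follows by an identical argument with the roles of the two components exchanged. Fix $\Phi=\phi+i\psi$ and the anti-holomorphic section $b$, and substitute the ansatz \eqref{Fh} into $(D+V)F_h=0$. Conjugating away the exponential weights turns this, as in \eqref{eqtosolve}, into $(D+V_\psi)(r_h,s_h)^T=-V_\psi(0,b)^T$ (note that $D(0,b)^T=0$ since $b$ is anti-holomorphic); applying the right inverse $D^{-1}$ of Proposition \ref{inverse} and using $DD^{-1}=\mathrm{Id}$ together with $D^{-1}V_\psi=D_\psi^{-1}V$, it is enough to produce $(r_h,s_h)$ solving the integral system \eqref{s-r equation} with $a=0$ and with the claimed $L^2$ decay. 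Indeed any such pair, read backwards, gives a genuine element of $\ker(D+V)$ of the form \eqref{Fh}.

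First I would decouple \eqref{s-r equation}: the second equation reads $s_h=-(\bar\pl_\psi^*)^{-1}(Q^+r_h)$, and inserting it into the first yields the single equation \eqref{solve for r}, namely $(I-S_h)r_h=-\bar\pl_\psi^{-1}(Q^-b)$ with $S_h=\bar\pl_\psi^{-1}Q^-(\bar\pl_\psi^*)^{-1}Q^+$. By Lemma \ref{normestim}, whose hypotheses $Q^+\in L^\infty$ and $Q^-\in W^{1,q}$ are exactly those assumed here, $S_h$ is bounded on $L^2$ with $\|S_h\|_{L^2\to L^2}=\mc{O}(h^{1/2-\eps})$. Hence for $h$ small $I-S_h$ is invertible on $L^2$ through the Neumann series \eqref{r neumann}, with inverse bounded uniformly in $h$; this defines $r_h$ and then $s_h=-(\bar\pl_\psi^*)^{-1}(Q^+r_h)$.

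The decay estimates then come in two steps. For $r_h$, the inhomogeneous term obeys $\|\bar\pl_\psi^{-1}(Q^-b)\|_{L^2}=\mc{O}(h^{1/2+\eps})$ by Lemma \ref{estimate1}, since $Q^-b\in W^{1,q}$; applying the uniformly bounded $(I-S_h)^{-1}$ gives $\|r_h\|_{L^2}=\mc{O}(h^{1/2+\eps})$. For $s_h$ I would not ask for any oscillatory gain: the operator $(\bar\pl_\psi^*)^{-1}$ is bounded on $L^2$ uniformly in $h$ (from Proposition \ref{inverse}, the weight $e^{2i\psi/h}$ being unimodular), and $Q^+\in L^\infty$, so $\|s_h\|_{L^2}\le C\|Q^+\|_{L^\infty}\|r_h\|_{L^2}=\mc{O}(h^{1/2+\eps})$. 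This completes the first case. For the second, with $b=0$, one decouples the other way, using the same estimates with $\bar\pl$ and $\bar\pl^*$ interchanged: solve \eqref{s-r equation} for $s_h$ first via $(I-\til S_h)s_h=-(\bar\pl_\psi^*)^{-1}(Q^+a)$, now invoking $Q^+\in W^{1,q}$ for the leading estimate and $Q^-\in L^\infty$ for the recovery $r_h=-\bar\pl_\psi^{-1}(Q^-s_h)$, and repeat the argument verbatim.

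The one genuinely delicate point is the sharp exponent $h^{1/2+\eps}$ rather than $h^{1/2}$: it is the surplus $\eps>0$ that makes both remainders negligible against the leading amplitude $b$ (resp. $a$) as $h\to0$, and it is bought precisely by the combination of the nondegeneracy of the critical points of the Morse phase $\psi$ and the extra Sobolev derivative carried by the potential multiplying the leading amplitude, both encoded in Lemmas \ref{estimate1} and \ref{normestim}. This also explains the asymmetric hypotheses: only that one potential needs to lie in $W^{1,q}$, while the companion potential enters solely through a uniformly $L^2$-bounded solution operator and may be merely $L^\infty$.
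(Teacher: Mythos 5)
Your proposal is correct and takes essentially the same route as the paper: the same conjugation and application of the right inverse $D^{-1}$ to reduce to the integral system \eqref{s-r equation}, the same decoupling into \eqref{solve for r} solved by Neumann series via Lemma \ref{normestim}, and the same estimates from Lemma \ref{estimate1}, including the uniform $L^2$-boundedness of $(\bar{\pl}_\psi^*)^{-1}$ used to bound $s_h$, which the paper leaves implicit. Your two explicit remarks --- that a solution of the integral equation, read backwards, is a genuine element of $\ker(D+V)$ because $D D_\psi^{-1}V = V_\psi$, and that the second case uses the mirrored operator $\til{S}_h=(\bar{\pl}_\psi^*)^{-1}Q^+\bar{\pl}_\psi^{-1}Q^-$ --- are exactly the points the paper treats as immediate.
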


Note that for arbitrary $(a,b)$ satisfying \eqref{abCond}, one can just add the solutions for $(a,0)$ and $(0,b).$

\subsection{Identifying the potential}\label{sec:IdPot}

As explained above, the CGO-type elements in the null space of $D+V$ with $V$ a diagonal potential, suffice to identify the potential on all of $M.$

\begin{proposition}\label{idpotential}
Let 
$D=\begin{pmatrix}0 & \bar{\pl}^* \\ \bar\pl & 0 \end{pmatrix}$, let $V_1,$ $V_2$ be two sections of ${\rm End}(E)$ with $A=A'=0$ when written in the form \eqref{D+V},Êand 
satisfying the regularity assumption \eqref{assump}. 
If the Cauchy data spaces $\mc{C}_{D+V_1}$ and $\mc{C}_{D+V_2}$ agree, then $V_1 = V_2$ on all of $M.$
\end{proposition}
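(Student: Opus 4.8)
The plan is to use the CGO solutions constructed in Proposition \ref{first order CGO a = 0} to test the equality of the two potentials pointwise. The fundamental tool is the integral identity obtained from the agreement of the Cauchy data spaces: if $\mc{C}_{D+V_1}=\mc{C}_{D+V_2}$, then for any solution $F$ of $(D+V_1)F=0$ and any solution $\til F$ of the transposed (adjoint) problem $(D+V_2^*)\til F = 0$, an integration by parts on $M$ shows that the boundary terms cancel and one is left with
\begin{equation*}
	\int_M \langle (V_1 - V_2) F, \til F \rangle \, dv = 0.
\end{equation*}
The first step, therefore, is to carefully set up this orthogonality relation, verifying that the boundary contributions vanish precisely because the Cauchy data of the two operators coincide (so that a solution for $V_1$ can be matched by a solution for $V_2$ with the same boundary trace). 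Since $V_1,V_2$ are block-diagonal (as $A=A'=0$), the off-diagonal structure of $D$ will pair the $e^{\Phi/h}$ components against the $e^{\bar\Phi/h}$ components.

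The second step is to insert the two families of CGO solutions. Fix a point $p\in M$ in the dense set of Proposition \ref{denseset} and a holomorphic Morse function $\Phi=\phi+i\psi$ with a critical point at $p$. I would take $F_h$ of the form \eqref{Fh} (built for $V_1$) and a second CGO solution $\til F_h$ of the analogous form built from $V_2^*$ with phase $-\Phi$ (so the exponential weights combine to produce the oscillatory factor $e^{2i\psi/h}$ in the integrand). After cancelling the real exponential $e^{2\phi/h}$ between the two solutions, the leading contribution to the integral identity comes from pairing the holomorphic amplitudes $a$ and $\bar b$ against the relevant block of $Q_1^\pm - Q_2^\pm$, while the remainder terms $r_h,s_h$ contribute terms of size $\mc{O}(h^{\demi+\eps})$ by the estimates in Proposition \ref{first order CGO a = 0} and Lemma \ref{estimate1}.

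The third step is the stationary-phase (or Morse-lemma) analysis. The oscillatory integral $\int_M e^{2i\psi/h}(Q_1^\pm-Q_2^\pm)\,a\,\overline{b}\,dv$, with $\psi$ having a nondegenerate critical point at $p$, localizes as $h\to 0$: after multiplying by the appropriate power of $h$ and taking $h\to 0$, the leading asymptotics isolate the value $(Q_1^\pm - Q_2^\pm)(p)$. Choosing $a$ and $b$ so that their product is nonvanishing at $p$ then forces $(Q_1^\pm-Q_2^\pm)(p)=0$. Because $p$ ranges over a dense set and $Q_j^\pm$ are continuous (indeed $C^r$ by \eqref{assump}), this yields $Q_1^\pm=Q_2^\pm$ on all of $M$, hence $V_1=V_2$. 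I would run this argument separately for the $Q^+$ block (using the pair $F_h$, built from $a=0$, against a $G_h$-type solution of the second problem) and for the $Q^-$ block (with the roles reversed), invoking the two halves of Proposition \ref{first order CGO a = 0} to guarantee the requisite solutions exist given the asymmetric regularity of the $Q^\pm$.

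\emph{The main obstacle} I anticipate is the stationary-phase step under the limited regularity of the potentials: $Q_j^\pm$ are only $C^r\cap W^{s,p}$, not smooth, so one cannot simply quote the classical smooth stationary-phase expansion. The resolution, as in \cite{Bu} and \cite{GTgafa}, is to split the integrand into its value at the critical point plus a remainder, estimate the oscillatory integral of the constant piece explicitly, and control the remainder using the Hölder modulus of continuity together with the nondegeneracy of $\psi$; the matrix-valued nature of $Q^\pm$ requires only that one keep track of the ordering in the pairing $\langle(V_1-V_2)F_h,\til F_h\rangle$, but otherwise the scalar estimates of \cite{GTgafa} apply componentwise. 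A secondary technical point is ensuring the error terms genuinely vanish in the limit, which relies on the gain $h^{\demi+\eps}$ in the remainder estimates exceeding the loss from the stationary-phase normalization.
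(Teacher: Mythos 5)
Your overall architecture (Green's identity from equal Cauchy data, CGO solutions with Morse holomorphic phase, stationary phase at the critical point, density of critical points) is the paper's, but two of your concrete choices break the proof. First, the pairing. After Green's formula the operator $D$ drops out entirely and only the block-diagonal $V_2-V_1$ survives, so the Hermitian pairing matches top component against top component and bottom against bottom: with $F^1=(e^{\Phi/h}u_1,e^{\bar\Phi/h}u_2)^T$ solving the first equation and $F^2=(e^{-\Phi/h}v_1,e^{-\bar\Phi/h}v_2)^T$ solving the adjoint one, the identity reads $\int_M e^{2i\psi/h}\cjg Q^+u_1,v_1\cjd+e^{-2i\psi/h}\cjg Q^-u_2,v_2\cjd=0$; there is no ``off-diagonal pairing of the $e^{\Phi/h}$ against the $e^{\bar\Phi/h}$ components.'' Consequently your prescription of testing an $F_h$-type solution \eqref{Fh} (which has $a=0$, amplitude only in the bottom slot) against a $G_h$-type solution \eqref{Gh} (amplitude only in the top slot) produces \emph{no} amplitude-times-amplitude term at all: the top block pairs $r^1_h$ with $a+r^2_h$ and the bottom block pairs $b+s^1_h$ with $s^2_h$, every term carries a remainder factor, and the identity yields no information about $Q^\pm(z_0)$. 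The paper instead pairs two $F_h$-type solutions (amplitudes $b_1=\theta e_j$, $b_2=\theta e_k$) to get the order-$h$ main term \eqref{statphase} for $Q^-$, and two $G_h$-type solutions for $Q^+$; moreover $\theta$ is not merely ``nonvanishing at $p$'' but is chosen via Riemann--Roch to vanish at \emph{all other} critical points of $\Phi$, without which those points contaminate the leading asymptotics.

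Second, your error bookkeeping fails as stated, and you have the comparison backwards in your closing sentence. The stationary-phase main term is of size $C_{z_0}h$, while your bound $\mc{O}(h^{\demi+\eps})$ on the cross terms $\int e^{-2i\psi/h}\cjg Q^-b_1,s^2_h\cjd$ (and its twin with $s^1_h$) is \emph{larger} than $h$, so after dividing by $h$ these terms do not vanish and the limit argument collapses. The paper upgrades them to $\mc{O}(h^{1+\eps})$ by substituting $s_h^2=-(\bar\pl^*_\psi)^{-1}Q^+r^2_h$ and transferring the oscillatory inverse onto $Q^-b_1$ via the adjoint estimates \eqref{estadj}. Those estimates require $Q^-b_1$ to vanish on $\pl M$, which is exactly why the boundary determination step \eqref{BdyDetQ} — the fact, taken from \cite[Lemma 7.4]{GTgafa}, that the Cauchy data already determine $Q^\pm|_{\pl M}$, so one may assume $Q_1^\pm=Q_2^\pm$ on $\pl M$ — is an essential preliminary that your proposal omits entirely; it is also what kills the boundary terms in the integration by parts showing that the part of the oscillatory integral away from $z_0$ is $o(h)$. (Only the quadratic remainder term $\cjg Q^-s^1_h,s^2_h\cjd$, of size $\mc{O}(h^{1+2\eps})$, is harmless with your naive estimates.) Your low-regularity worry about stationary phase, by contrast, is a non-issue here: the main term involves only the continuous $Q^\pm$ against smooth data, and the paper handles it by cutting off near $z_0$ and using integration by parts plus Riemann--Lebesgue away from it.
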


\begin{proof} 
Given the construction of CGO-type solutions above, the proof now follows that of \cite[Thm. 3.3]{GTgafa} (and hence also the argument of Bughkeim \cite{Bu}), we repeat the main arguments for the reader's convenience.
Let $\Phi$ be a Morse holomorphic function with a critical point at $z_0$. The existence of such a function for a dense set of points $z_0$ of $M_0$ is insured by Proposition \ref{denseset}.
For any $b_1,$ $b_2$ anti-holomorphic sections of $\underline{\bbC}^n\otimes (T^{1,0}M)^*,$ we can find 
\[
F^1_h:=\begin{pmatrix}{e^{\Phi/h} r^1_h}\cr{e^{\bar\Phi/h}(b_1 + s^1_h)}\end{pmatrix},  \quad 
F^2_h:=\begin{pmatrix}{e^{-\Phi/h} r^2_h}\cr{e^{-\bar\Phi/h}(b_2 + s^2_h)}\end{pmatrix}
\]
solutions of   $(D+V_1)F^1_h=0$  and $(D+V_2^*)F^2_h=0$ as in \eqref{Fh}, Êwhere $r^j_h,$ $s^j_h$ are constructed in Proposition \ref{first order CGO a = 0}.

Since $\mc{C}_{D+V_1}=\mc{C}_{D+V_2}$, there exists an $F_h$ satisfying
\begin{equation*}
	(D+V_2)F_h=0 \quad \text{ and } \quad
	i_{\pl M}^*F_h=i_{\pl M}^*F^1_h.
\end{equation*}
In particular, $(D+V_2)(F^1_h-F_h)=(V_2-V_1)F^1_h$ and 
$i_{\pl M}^*(F^1_h-F_h)=0$. We use Green's formula and the vanishing of $F^1_h-F_h$ on the boundary
to get 
\begin{equation}\label{integid}
0=\int_{M}\cjg (D+V_2)(F^1_h-F_h),F_h^2\cjd =\int_{M}\cjg (V_2-V_1)F^1_h,F^2_h\cjd.
\end{equation}
where $\cjg\cdot,\cdot\cjd$ denotes the Hermitian scalar product on $E$.
If we denote $Q_2^\pm - Q_1^\pm$ by $Q^{\pm},$ we can rewrite this as 
\begin{equation}\label{integralid}
	0=
	\int_{M}e^{-2i\frac{\psi}{h}}
	\Big( \cjg Q^{-}b_1,b_2\cjd
	+ \cjg Q^{-}b_1,s_h^2\cjd
	+ \cjg Q^{-}s_h^1,b_2\cjd 
	+ \cjg Q^{-}s_h^1,s_h^2\cjd \Big)
	+ e^{2i\frac{\psi}{h}}\cjg Q^+r^1_h,r^2_h\cjd.
\end{equation}
By Proposition \ref{first order CGO a = 0}, we always have
\begin{equation}\label{firstremain}
	\int_{M}e^{-2i\frac{\psi}{h}}	 \cjg Q^{-}s_h^1,s_h^2\cjd +
	e^{2i\frac\psi h}\cjg Q^+r^1_h,r^2_h\cjd=\mc{O}(h^{1+\eps})
\end{equation}
for some $\eps>0$.
Next we choose 
$b_1=\theta e_j$ and $b_2=\theta e_k$, where $\theta$ is an anti-holomorphic $1$-form which vanishes 
at all critical points of $\Phi$ in $M$ except at the critical point $z_0\in M$ of $\Phi$ (and $e_j,$ $e_k$ denote vectors in the canonical basis of $\bbC^n$). 
The existence of $\theta$ is guaranteed by the Riemann-Roch theorem (see Lemma 4.1 in \cite{GTAust}). 
We observe by using stationary phase that, as $h\to 0$,  
\begin{equation}\label{statphase}
	\int_{M}e^{-2i\psi/h}\cjg Q^-b_1,b_2\cjd 
	= C_{z_0} h e^{-2i\psi(z_0)/h}\cjg Q^-(z_0)e_j,e_k\cjd |\theta(z_0)|^2+o(h)
\end{equation}
for some constant $C_{z_0}\not=0$. To show this, one can decompose the integral, using a smooth cutoff function, near $z_0$ and far from $z_0$. The part localized near $z_0$ is simply obtained by stationary phase while 
the other part is $o(h)$, as can be seen by integrating by parts once (using $\pl_z e^{i\psi/h}=ie^{i\psi/h}(\pl_z\psi)/h $) to gain an $h$ factor and then applying Riemann-Lebesgue to show that the remaining 
oscillating integral goes to $0$ as $h\to 0.$ There are no boundary terms in the integration by parts since, by \eqref{BdyDetQ}, $Q^-|_{\pl M}=0.$ 

Let us now consider the term with $\cjg Q^-b_1,s_h^2\cjd$ in \eqref{integralid}: using \eqref{shnorm} we can write this as
\begin{equation*}
	\int_{M}\cjg e^{-2i\psi/h}  Q^-b_1,s_h^2\cjd
	=\int_{M} e^{-2i\psi/h}\cjg  \mc{E}^*(\bar{\pl}^{*-1})^*\mc{R}^*e^{-2i\psi/h}Q^-b_1 ,  
	Q_2^+r^2_h \cjd.
\end{equation*}
Since $Q^-|_{\pl M}=0$, we may use \eqref{estadj} to deduce that  
$||\mc{E}^*(\bar{\pl}^{*-1})^*\mc{R}^*e^{2i\psi/h}Q^-b_1||_{L^2}=\mc{O}(h^{1/2+\eps})$ and thus combining with Proposition 
\ref{first order CGO a = 0}, we find that  
\[\int_{M}\cjg e^{2i\psi/h}  Q^-b_1,s_h^2\cjd=\mc{O}(h^{1+\eps}).\]
the same argument shows that the term involving $\cjg Q^-s_h^1,b\cjd$ in \eqref{integralid} is 
$\mc{O}(h^{1+\eps})$. 
These last two estimates combined with \eqref{statphase} and \eqref{firstremain} imply that $Q^-_1(z_0)=
Q^-_2(z_0)$ by letting $h\to 0$.
The same proof using the complex geometric optics solution $G_h$ of Proposition \ref{first order CGO a = 0} gives $Q^+_1(z_0)=\til{Q}^+_2(z_0)$.
\end{proof}

\subsection{Reduction to the diagonal case} \label{sec:RedDiag}

Proposition \ref{idpotential} is the result we wanted when the potential is diagonal. We now show that one can always reduce to the diagonal case, and thereby establish the full result.

We first rewrite the operator $D+V$ as follows 
\[ 
D+V=  \begin{pmatrix}
	0 & (\bar\pl+A')^* \\ \bar\pl+A & 0
	\end{pmatrix}
	+\begin{pmatrix} Q^+ & 0 \\ 0 & Q^- \end{pmatrix}.
\]
The operators $\bar{\pl}_A:=\bar{\pl}+A$ and $\bar{\pl}_{A'}:=\bar{\pl}+A'$ are CR operators on the trivial bundle $\underline{\bbC}^n=M\x \cc^n$ over $M$, so by Lemma \ref{equivCR} they induce holomorphic structures  on $\underline{\bbC}^n$ and by Proposition \ref{holotrivial} there are holomorphic trivializations
$F_A,$ $F_{A'}\in C^{1+r}\cap W^{1+s,p}(M,{\rm End}(\underline{\bbC}^n))$ such that 
\begin{equation}\label{HoloTrivDef}
	F_A^{-1}\bar{\pl}F_A= \bar{\pl}_A , \quad F_{A'}^{-1}\bar{\pl}F_{A'}=\bar{\pl}_{A'}. 
\end{equation}
From the factorization
\begin{equation}\label{factorization}
 D+V=  \begin{pmatrix}
	F_{A'}^* & 0 \\ 0 & F_{A}^{-1}
	 \end{pmatrix} \left [
	\begin{pmatrix}
	0 & \bar\pl^* \\ \bar\pl & 0
	\end{pmatrix}+ \begin{pmatrix}
	(F_{A'}^{*})^{-1}Q^+F_A^{-1} & 0 \\ 0 & F_{A}Q^-F_{A'}^*
	\end{pmatrix}\right]  \begin{pmatrix}
	F_{A} & 0 \\ 0 & (F_{A'}^{*})^{-1}
	 \end{pmatrix}.
\end{equation}
we see that the Cauchy data space of $D+V$ is determined by the Cauchy data space of $D+\til{V}$ and the boundary values of $F_{A},$ $F_{A'},$  where 
\begin{equation*}
	\til{V}:= \begin{pmatrix}
	(F_{A'}^{*})^{-1}Q^+F_A^{-1} & 0 \\ 0 & F_{A}Q^-F_{A'}^*
	\end{pmatrix}.
\end{equation*}
(Under assumption \eqref{assump}, one has $\til{V}\in W^{1,q}(M,{\rm End}(\cc^n))$ for $q>2$.)
We will show that, when the Cauchy data space of two Dirac-type operators coincide, it is possible to choose these isomorphisms consistently at the boundary.

\begin{prop}\label{boundary holo}
Let $A_i,$ $Q_i^\pm$ satisfy the regularity assumption \eqref{assump}, for $i=1,2$. 
If
 $\begin{pmatrix}Q^+_1& (\bar\pl + A_1')^*\cr \bar\pl+A_1&Q^-_1\cr\end{pmatrix}$ and $\begin{pmatrix}Q^+_2& (\bar\pl + A_2')^*\cr \bar\pl+A_2&Q^-_2\cr\end{pmatrix}$ 
have the same Cauchy data at $\pl M,$ 
then there exist bundle isomorphisms $\til F_{A_1},$ $\til F_{A_1'}$ and $\til F_{A_2},$ $\til F_{A_2'}$ as in \eqref{HoloTrivDef} such that $\til F_{A_1}=\til F_{A_2}$ on $\pl M$ and $\til F_{A_1'}=\til F_{A_2'}$ on $\pl M.$
\end{prop}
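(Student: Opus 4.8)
The plan is to reduce the matching of trivializations to an equality of boundary ``Hardy spaces'' attached to the two off-diagonal $\bar\pl$-structures, and then to extract that equality from the coincidence of the full Cauchy data.

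First I would record the non-uniqueness in \eqref{HoloTrivDef}: if $F_A$ and $\til F_A$ both satisfy $F_A^{-1}\bar\pl F_A=\bar\pl+A$, then $G:=\til F_A F_A^{-1}$ satisfies $\bar\pl G=0$, so any two holomorphic trivializations of $\bar\pl_{A}$ differ exactly by left multiplication by a holomorphic $GL(n,\cc)$-valued map. Consequently, writing $W:=F_{A_2}F_{A_1}^{-1}$ (a fixed $C^{1+r}\cap W^{1+s,p}$ automorphism of $\underline{\cc}^n$), the existence of trivializations with $\til F_{A_1}=\til F_{A_2}$ on $\pl M$ is equivalent to $W|_{\pl M}$ being the boundary trace of a single holomorphic $GL(n,\cc)$-valued function $\hat W$ on $M$: given such a $\hat W$ one takes $\til F_{A_1}=\hat W F_{A_1}$ and $\til F_{A_2}=F_{A_2}$, and these are automatically valid trivializations.

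Next I would reformulate this via the Hardy spaces $\mc{H}_{A_i}:=\{s|_{\pl M}:s\in H^1(M;\underline{\cc}^n),\ (\bar\pl+A_i)s=0\}$. Since every holomorphic section of $\bar\pl_{A_i}$ has the form $F_{A_i}^{-1}g$ with $g$ an ordinary holomorphic $\cc^n$-valued map, one has $\mc{H}_{A_i}=F_{A_i}^{-1}|_{\pl M}\,\mc{H}_0$, where $\mc{H}_0$ is the trace space of ordinary holomorphic maps. A direct computation then shows $\mc{H}_{A_1}=\mc{H}_{A_2}$ amounts to $W|_{\pl M}\,\mc{H}_0=\mc{H}_0$; applying this to the constant sections $e_j\in\mc{H}_0$ shows the columns of $W|_{\pl M}$, and (using $W^{-1}$) those of $W^{-1}|_{\pl M}$, are holomorphic traces, which produces $\hat W$ and $\widehat{W^{-1}}$, and since $\det\hat W\cdot\det\widehat{W^{-1}}\equiv 1$ on $\pl M$ extends to the constant $1$ this forces $\hat W\in GL(n,\cc)$ everywhere. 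Thus the statement reduces to proving that the coincidence of Cauchy data forces $\mc{H}_{A_1}=\mc{H}_{A_2}$, together with the symmetric assertion for the adjoint operators $(\bar\pl+A_i')^*$ (whose kernel is $F_{A_i'}^{*}$ times $\bar\pl^*$-closed sections), which yields $\til F_{A_1'}=\til F_{A_2'}$ on $\pl M$.

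The main step, and the principal obstacle, is to recover the CR Hardy space $\mc{H}_{A_i}$ from the Cauchy data of the coupled operator $D+V_i$. The model is the decoupled case $Q^\pm=0$: there $(D+V_i)(u,v)^{\mathsf T}=0$ splits into $(\bar\pl+A_i)u=0$ and $(\bar\pl+A_i')^* v=0$, so $\mc{C}_{D+V_i}=\mc{H}_{A_i}\times\mc{H}^*_{A_i'}$ and the Hardy spaces are the factor projections, whence equal Cauchy data give $\mc{H}_{A_1}=\mc{H}_{A_2}$ immediately. To treat genuine $Q^\pm$ I would use that they are of zeroth order, hence only a lower-order coupling: starting from a holomorphic section $s$ of $\bar\pl_{A_i}$ one has $(D+V_i)(s,0)^{\mathsf T}=(Q_i^+ s,0)^{\mathsf T}$, and I would correct $(s,0)$ to a genuine solution of $(D+V_i)(u,v)^{\mathsf T}=0$ by solving away the lower-order right-hand side with the right inverse $D^{-1}$ of Proposition \ref{inverse} through a Neumann series as in \S\ref{sec:CGO}, arranging the correction to vanish to high order at $\pl M$. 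Combined with the boundary determination of the boundary jets of $Q_i^\pm, A_i, A_i'$ (the argument producing \eqref{BdyDetQ}), this identifies $\mc{H}_{A_i}$, modulo a common boundary contribution, with a projection of $\mc{C}_{D+V_i}$, so that $\mc{C}_{D+V_1}=\mc{C}_{D+V_2}$ delivers $\mc{H}_{A_1}=\mc{H}_{A_2}$ and, symmetrically, the equality of the adjoint Hardy spaces. The delicate point is exactly that the zeroth-order terms $Q^\pm$ do not corrupt this recovery, i.e. that the coupled Cauchy data still ``sees'' the boundary traces of the holomorphic sections of the purely off-diagonal CR operators; controlling this mixing is where the lower-order nature of $Q^\pm$ and the estimates of \S\ref{sec:CGO} have to be used with care.
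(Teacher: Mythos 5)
Your endgame is sound and coincides with the paper's: once one knows that $i_{\pl M}^*\bigl(F_{A_2}F_{A_1}^{-1}\bigr)$ is the trace of a holomorphic map (invertibility then following by the symmetric argument with the indices swapped, since the product of the two extensions is holomorphic and equals the identity on $\pl M$), the replacements $\til F_{A_1}=F_{A_1}$, $\til F_{A_2}=F^{-1}F_{A_2}$ finish the proof, and your Hardy-space reformulation of that reduction is correct. The genuine gap is your main step, namely deducing $\mc{H}_{A_1}=\mc{H}_{A_2}$ from $\mc{C}_{D+V_1}=\mc{C}_{D+V_2}$. Your correction scheme has no small parameter: in \S\ref{sec:CGO} the Neumann series converges only because conjugation by $e^{\pm\Phi/h}$ inserts the oscillating factors $e^{\pm 2i\psi/h}$, giving $\|S_h\|_{L^2\to L^2}=\mc{O}(h^{1/2-\eps})$; for a plain holomorphic section $s$ there is no phase, the relevant operator is $D^{-1}V_i$ at ``$h=1$'', and its norm is not small unless one assumes smallness of $Q_i^\pm$ --- precisely the hypothesis of Li \cite{Li} that the theorem is designed to remove. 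Moreover, even where the series converges, the corrected solution's first-component trace is $s|_{\pl M}$ plus the trace of a correction depending on $Q_i^\pm$ and $A_i$ throughout the interior; you cannot ``arrange the correction to vanish to high order at $\pl M$'' with the fixed right inverse $D^{-1}$ (imposing zero boundary data is a boundary value problem for $D+V_i$, not solved by $D^{-1}$, and possibly obstructed), so equal Cauchy data transfer only to equality of two perturbed trace spaces whose perturbations differ with $i$; the phrase ``modulo a common boundary contribution'' is unfounded. Note also that no slice of $\mc{C}_{D+V_i}$ yields $\mc{H}_{A_i}$: for instance $\{u|_{\pl M}:(u,v)\in\ker(D+V_i),\ v|_{\pl M}=0\}$ is not $\mc{H}_{A_i}$, because such $v$ solves $(\bar\pl+A_i')^*v=-Q_i^+u\neq 0$ and does not vanish in the interior.

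The paper bypasses any attempt to recover the Hardy spaces from the Cauchy data. It pairs, via Green's formula, a CGO solution $\til U_h^1$ of $(D+V_1)\til U_h^1=0$ carrying the phase $e^{\Phi/h}$ (built by Proposition \ref{first order CGO a = 0} for the conjugated diagonal system and transported through the factorization \eqref{factorization}) against a CGO solution $\til U_h^2$ of the \emph{adjoint} system $(D+V_2)^*\til U_h^2=0$ carrying the phase $e^{-\Phi/h}$. On the off-diagonal pairing the phases cancel identically, so letting $h\to 0$ isolates $\int_M\langle F_{A_2}(A_2-A_1)F_{A_1}^{-1}a,b\rangle=0$, all terms involving $Q_i^\pm$ and the remainders $r_h^j,s_h^j$ being $o(1)$ by Lemma \ref{estimate1}; the relations $\bar\pl F_{A_j}=F_{A_j}A_j$ and Stokes convert this to $\int_{\pl M}i_{\pl M}^*\langle F_{A_2}F_{A_1}^{-1}a,b\rangle=0$, and the criterion of Lemma \ref{orthogonality condition}, applied with $a=e_k$ and $b=\pl(\theta e_j)$ for $\theta$ harmonic, yields exactly the holomorphic extendability your reduction requires, with no smallness hypothesis. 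Your proposal correctly identifies the target statement but is missing this mechanism, which is the substance of the proposition.
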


\begin{proof}
Let $F_{A_1},$ $F_{A_1'}$ and $F_{A_2},$ $F_{A_2'}$ be as in \eqref{HoloTrivDef}, we will show that one can modify $F_{A_1'}$ and $F_{A_2'}$ to satisfy the requirements of the proposition.

Our main tools will be the CGO-type solutions from \S\ref{sec:CGO} and the following orthogonality condition for a function on $\pl M$ to extend holomorphically into the interior. The condition was derived in \cite{GTgafa} and essentially is a basic computation using the Hodge decomposition. We refer the reader to Lemma 4.1 of \cite{GTgafa} for a detailed proof.
\begin{lemma}
\label{orthogonality condition}
A complex valued function $f\in H^{1/2}(\partial M)$ is the restriction of a holomorphic function if and only if
\[\int_{\partial M} f i_{\partial M}^*\eta = 0\]
for all 1-forms $\eta\in C^{\infty}(M ; (T^{1,0}M)^*)$ satisfying $\bar\partial\eta = 0$.
\end{lemma}
Notice that, since $\pl: H^{1}(M)\to L^2(M;(T^{1,0}M)^*)$ is surjective, the forms $\eta$  in the lemma can be replaced by $\pl \theta$ with $\theta$ a harmonic function.

Let $\Phi$ be a holomorphic Morse function and let $a$ and $b$ satisfy \eqref{abCond}, applying Proposition \ref{first order CGO a = 0} to the system $D+\til{V_1},$ 
we can find $U_h^1$ of the form
\[ U_h^1=
\begin{pmatrix}  
e^{\Phi/h}(a+r_h^1)  \\ e^{\overline\Phi/h}(s_h^1)  \end{pmatrix}\]
such that $(D+ \til{V_1})U_h^1 =0.$
From \eqref{factorization} it follows that
\begin{equation*}
	\til{U}_h^1 = 
	\begin{pmatrix}  e^{\Phi/h}F_{A_1}^{-1}(a+ r_h^1)  \\ 
	e^{\overline\Phi/h}F_{A_1'}^*(s_h^1)  \end{pmatrix}
\end{equation*}
satisfies $(D+V_1)\til U_h^1 = 0.$

Similarly, using the holomorphic Morse function $-\Phi,$ and $b$ satisfying \eqref{abCond}, 
we can find a solution to 
\begin{equation*}
	\begin{pmatrix}(Q^+_2)^*& (\bar\pl + A_2)^*\cr \bar\pl+A_2'&(Q^-_2)^*\cr\end{pmatrix}\til U_h^2 = 0,
	\text{ of the form }
	\til U_h^2=\begin{pmatrix}  e^{-\Phi/h}F_{A_2'}^{-1}(r_h^2)  \\ 
	e^{-\overline\Phi/h}F_{A_2}^*(b+s_h^2)  \end{pmatrix}.
\end{equation*}

Note that $\til U_h^2$ is in the null space of $(D+V_2)^*.$
Using the equality of the Cauchy data for $D+V_1$ and $D+V_2,$ we can find an element $V_h$ in the null
space of $D+V_2$ whose boundary data agrees with that of $\til{U}_h^1.$
Hence we obtain no boundary terms when we apply Green's formula to see
\begin{equation*}
	\int_M \langle (D+V_2)\til U_h^1, \til U_h^2 \rangle
	= \int_M \langle (D+V_2)(\til U_h^1 - V_h), \til U_h^2 \rangle = 0.
\end{equation*}
Applying the remainder estimates from Proposition \ref{first order CGO a = 0} we find that,
 as $h\to 0,$
\begin{equation*}
\begin{gathered}
	0 = \int_M \langle (D+V_{2}) \til U_h^1, \til U_h^2 \rangle
	= \int_M \langle (V_{2} - V_1) \til U_h^1, \til U_h^2 \rangle
	= \int_M \left\langle 
		\begin{pmatrix} Q^+_2-Q^+_1 & A_2'^*-A_1'^* \\ A_2 - A_1 & Q^-_2 - Q^-_1 \end{pmatrix}
		\til U_h^1, \til U_h^2 \right\rangle \\
	= \int_M \langle (A_2-A_1) F_{A_1}^{-1}(a+r_h^1), F_{A_2}^*(b+s_h^2) \rangle 
	+ o(1) \\
	= \int_M \langle F_{A_2}(A_2-A_1) F_{A_1}^{-1}a, b \rangle 
	+ o(1)
\end{gathered}
\end{equation*}
and hence 
\begin{equation*}
	\int_M \langle F_{A_2}(A_2-A_1) F_{A_1}^{-1}a, b \rangle 
	= 0
\end{equation*}
for all $a$ and $b$ as above.
Using the relations $\bar\pl F_{A_j} =  F_{A_j} A_j,$ 
this integral becomes 
\[
	 0 
	= \int_M \langle \bar\pl (F_{A_2}F_{A_1}^{-1}a), b \rangle 
	= \int_{\pl M} i_{\pl M}^* \langle F_{A_2}F_{A_1}^{-1} a,  b \rangle.
\]

We are free to choose the  holomorphic section $a$ and the antiholomorphic $1$-form $b$.  Denoting $(e_1,\dots, e_n)$ the canonical (holomorphic) basis of $\underline{\bbC}^n$,  we 
choose $a=e_k$ and $b=\pl (\theta e_j)$
where $\theta$ is a harmonic function $\bar{\pl}\pl\theta=0$.
 Then if we denote by $(F_{A_2} F_{A_1}^{-1})_{j,k}$ the $(j,k)$ component of the matrix $i_{\pl M}^* F_{A_2}F_{A_1}^{-1}$, we have, by Lemma \ref{orthogonality condition}, that each component of the endomorphism extends holomorphically into $M$ and we see that $i_{\pl M}^* F_{A_2}F_{A_1}^{-1}  $ admits a holomorphic extension $F$.
 
This function $F$ is invertible. Indeed, switching the indices $1$ and $2,$ we find a holomorphic extension function with boundary value $i_{\pl M}^* F_{A_1}F_{A_2}^{-1}.$ The composition of these functions is holomorphic and equal to the identity on $\pl M,$ hence on all of $M.$

Notice that the endomorphisms $\til F_{A_1} = F_{A_1},$ $\til F_{A_2} = F^{-1}F_{A_2}$ satisfy the requirements of Proposition \ref{boundary holo}, since 
\begin{equation*}
	\til F_{A_2}|_{\pl M} = (F^{-1} F_{A_2})|_{\pl M} = F_{A_1}|_{\pl M} = \til F_{A_1}|_{\pl M}.
\end{equation*}
Using similar arguments, we can find a holomorphic extension $F'$ of $F_{A_1'}F_{A_2'}^{-1}$ from the boundary of $M,$ and then $\til F_{A_1'} = F_{A_1'},$ $\til F_{A_2'} = (F')^{-1}F_{A_2'}$ will satisfy the requirements of Proposition \ref{boundary holo}, thus completing the proof.
\end{proof} 
To summarize, by Proposition \ref{boundary holo} we are able to conjugate the operator $D+ V_j$ into the operator $D + \til V_j$ where $\til V_j$ has block structure with only diagonal entries. Furthermore, since $\til F_{A_1} = \til F_{A_2}$ and  $\til F_{A_1'}  = \til F_{A_2'}$ along the boundary, we have that $\cal{C}_{D + \til V_1}= \cal{C}_{D + \til V_2}$. We can now apply Proposition \ref{idpotential} to $D + \til V_j$ and prove
\begin{theorem}\label{iduptogauge}
Let $V_1,V_2$ be two sections  of ${\rm End}(E)$ satisfying the regularity assumption \eqref{assump}, 
and $D:=\begin{pmatrix}0 & \bar{\pl}^*\cr 
\bar\pl & 0\cr\end{pmatrix}$. 
If the Cauchy data spaces $\mc{C}_{D+V_1}$ and $\mc{C}_{D+V_2}$ agree, then there exist $\mathcal C^1$ bundle isomorphisms $F,G $ of $\underline{\bbC}^n$ such that  $F|_{\pl M}=G|_{\pl M}={\rm Id}$ and, as operators, 
\begin{equation*}
 D+V_2=  \begin{pmatrix}
	G & 0 \\ 0 & F^{-1}
	\end{pmatrix} (D+V_1) \begin{pmatrix}
	F & 0 \\ 0 & G^{-1}
	\end{pmatrix}.
\end{equation*}
\end{theorem}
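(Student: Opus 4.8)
The plan is to assemble the pieces already established in this section. First I would apply Proposition \ref{boundary holo} to the two operators $D+V_1$ and $D+V_2$, which by hypothesis have the same Cauchy data. This produces holomorphic trivializations $\til F_{A_1}, \til F_{A_1'}, \til F_{A_2}, \til F_{A_2'}$, of class $C^{1+r}\cap W^{1+s,p}$, satisfying \eqref{HoloTrivDef} and agreeing on the boundary: $\til F_{A_1}=\til F_{A_2}$ and $\til F_{A_1'}=\til F_{A_2'}$ on $\pl M$. Using the factorization \eqref{factorization} for each $j$, the operator $D+V_j$ is conjugated into $D+\til V_j$ where $\til V_j=\begin{pmatrix}(\til F_{A_j'}^*)^{-1}Q_j^+\til F_{A_j}^{-1} & 0 \\ 0 & \til F_{A_j}Q_j^-\til F_{A_j'}^*\end{pmatrix}$ is purely diagonal, and $\til V_j\in W^{1,q}(M,\End(\cc^n))$.

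The key observation is that the conjugating factors in \eqref{factorization}, namely $\diag(\til F_{A_j'}^*,\til F_{A_j}^{-1})$ and $\diag(\til F_{A_j},(\til F_{A_j'}^*)^{-1})$, are \emph{equal on $\pl M$} for $j=1$ and $j=2$ because the trivializations agree there. Consequently, conjugating by a boundary-identity isomorphism does not change the Cauchy data space, so I would conclude $\mc C_{D+\til V_1}=\mc C_{D+\til V_2}$. At this point Proposition \ref{idpotential} applies directly to the diagonal operators $D+\til V_j$ and yields $\til V_1=\til V_2$ on all of $M$, i.e. the two diagonal potentials coincide.

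It remains to translate this equality back into the claimed gauge relation between $D+V_1$ and $D+V_2$. Writing out \eqref{factorization} for both $j$ and using $\til V_1=\til V_2$, one obtains
\[
 D+V_2=\begin{pmatrix}\til F_{A_2'}^* & 0 \\ 0 & \til F_{A_2}^{-1}\end{pmatrix}\begin{pmatrix}\til F_{A_1'}^* & 0 \\ 0 & \til F_{A_1}^{-1}\end{pmatrix}^{-1}(D+V_1)\begin{pmatrix}\til F_{A_1} & 0 \\ 0 & (\til F_{A_1'}^*)^{-1}\end{pmatrix}^{-1}\begin{pmatrix}\til F_{A_2} & 0 \\ 0 & (\til F_{A_2'}^*)^{-1}\end{pmatrix}.
\]
Setting $F:=\til F_{A_1}\til F_{A_2}^{-1}$ and $G:=\til F_{A_2'}^*(\til F_{A_1'}^*)^{-1}$ (up to a transpose/adjoint bookkeeping adjustment to match the stated block positions), these are $C^1$ bundle isomorphisms of $\underline{\cc}^n$ and, by the boundary agreement from Proposition \ref{boundary holo}, they restrict to the identity on $\pl M$. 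Substituting gives the desired identity $D+V_2=\diag(G,F^{-1})(D+V_1)\diag(F,G^{-1})$.

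The main obstacle is the last bookkeeping step: one must verify that the composite of the four diagonal factors above regroups exactly into the two-block conjugation $\diag(G,F^{-1})$ and $\diag(F,G^{-1})$ stated in the theorem, keeping careful track of adjoints (the off-diagonal block $\bar\pl^*$ forces the lower-right factors to appear as adjoints of the upper-left ones) and of which index labels the holomorphic structure of $A$ versus $A'$. Everything else is a direct citation of Propositions \ref{boundary holo} and \ref{idpotential}; no new analysis is required.
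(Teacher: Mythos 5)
Your proposal is correct and follows the paper's own argument exactly: apply Proposition \ref{boundary holo} to get trivializations agreeing on $\pl M$, use the factorization \eqref{factorization} to pass to the diagonal operators $D+\til V_j$, observe that the boundary agreement of the conjugating factors gives $\mc{C}_{D+\til V_1}=\mc{C}_{D+\til V_2}$, apply Proposition \ref{idpotential} to conclude $\til V_1=\til V_2$, and regroup the four diagonal factors. The only blemish is the bookkeeping slip you already flagged: matching the stated block positions forces $F=\til F_{A_1}^{-1}\til F_{A_2}$ (not $\til F_{A_1}\til F_{A_2}^{-1}$) together with $G=\til F_{A_2'}^{*}(\til F_{A_1'}^{*})^{-1}$, and either way $F|_{\pl M}=G|_{\pl M}={\rm Id}$, so the argument stands.
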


The proof of Theorem \ref{dirac ident} is an immediate consequence of Theorem \ref{iduptogauge} 
and Lemma \ref{to dbar}.

\begin{section}{The second order case}

In this section, we use the result for Dirac-type systems to establish Theorem \ref{main thm}: The 
Cauchy data of a connection Laplacian plus potential on a surface with boundary determines the
connection and the potential up to a gauge transformation equal to the identity at the boundary.

The connection between the second order system and the Dirac-type system is through the Bochner-Kodaria formula \eqref{FactForm}.
Indeed, this formula shows that 
\begin{equation*}
	u \in \CI(M; E) \text{ satisfies } (\nabla^*\nabla + W)u = 0 
\end{equation*}
is equivalent to
\begin{equation*}
	(u, \bar\pl^{\nabla}u) \in \CI(M; E \oplus (T^*_{0,1}\otimes E)) \text{ satisfies } 
	\begin{pmatrix}
	\Omega + W & (\bar\pl^{\nabla})^* \\ \bar\pl^{\nabla} & -\mathrm{Id}
	\end{pmatrix}
	\begin{pmatrix} u \\ \bar\pl^{\nabla} u \end{pmatrix}
	= \begin{pmatrix} 0 \\ 0 \end{pmatrix}.
\end{equation*}
Using the following proposition, we can readily apply the results above if we define $A$ by $\bar\pl^{\nabla} = \bar\pl +A,$ and let
\begin{equation*}
	D+V = 
	\begin{pmatrix}
	\Omega + W & (\bar\pl + A)^* \\ \bar\pl +A & -\mathrm{Id}
	\end{pmatrix}.
\end{equation*}

\begin{proposition}
The Cauchy data space of $\nabla^*\nabla + W$ determines the Cauchy data space of $D+V.$
\end{proposition}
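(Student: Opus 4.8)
The plan is to exhibit a linear isomorphism between the solution space of $(\nabla^*\nabla+W)u=0$ and the null space of $D+V$, compatible with the boundary Cauchy data maps. Concretely, the map $u\mapsto (u,\bar\pl^\nabla u)$ should send $H^1$ solutions of $(\nabla^*\nabla+W)u=0$ bijectively to $H^1$ solutions of $(D+V)(u,\bar\pl^\nabla u)^{\mathrm t}=0$, as the displayed computation already shows the forward implication via the Bochner--Kodaira factorization \eqref{FactForm}. For the reverse, given $(u,w)$ in the null space of $D+V$, the second row reads $\bar\pl^\nabla u - w = 0$, so $w=\bar\pl^\nabla u$ is determined by $u$, and substituting into the first row recovers $(\Omega+W)u+(\bar\pl^\nabla)^*\bar\pl^\nabla u = (\nabla^*\nabla + W)u = 0$. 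Thus the correspondence is a genuine bijection of null spaces, and no information is lost in passing from the second-order operator to the first-order system.

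The heart of the matter is translating this bijection into a statement about Cauchy data, since the two operators carry different notions of boundary data: $\mc C_{\nabla^*\nabla+W}$ records $(u|_{\pl M},\nabla_\nu u|_{\pl M})\in H^{\demi}\x H^{-\demi}$, whereas $\mc C_{D+V}$ records the full trace $(u,\bar\pl^\nabla u)|_{\pl M}\in H^{\demi}$. First I would observe that the tangential part of $\bar\pl^\nabla u|_{\pl M}$ is already determined by $u|_{\pl M}$ (it is a boundary differential operator applied to the Dirichlet trace), while the normal part encodes precisely the Neumann-type data $\nabla_\nu u|_{\pl M}$. Decomposing $\bar\pl^\nabla u = \tfrac12(\nabla_S u + i\nabla_T u)\otimes\bar Z^*$ in the local frame from Lemma \ref{to dbar}, with $S$ tangential and $T=JS$ normal (or vice versa) along $\pl M$, makes explicit that the pair $(u,\bar\pl^\nabla u)|_{\pl M}$ is recovered from $(u|_{\pl M},\nabla_\nu u|_{\pl M})$ by an invertible, boundary-local, first-order expression, and conversely. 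Hence the two Cauchy data spaces determine one another through an explicit boundary isomorphism that depends only on $\nabla$ near $\pl M$.

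The main obstacle I anticipate is regularity bookkeeping at the boundary: the connection $\nabla$ is only $C^r\cap W^{s,p}$, so $A=\bar\pl^\nabla-\bar\pl$ and the curvature-type term $\Omega$ are of limited smoothness, and one must check that the boundary change-of-data map is a bounded isomorphism between the correct Sobolev trace spaces $H^{\demi}$ and $H^{-\demi}$ rather than merely a formal identity. A related subtlety is that the statement of the proposition says the Cauchy data of $\nabla^*\nabla+W$ \emph{determines} that of $D+V$; I read this as asserting the existence of the explicit boundary transformation just described, so I would phrase the argument as: the traces $u|_{\pl M}$ and $\nabla_\nu u|_{\pl M}$ determine, via the frame computation, the full trace $(u,\bar\pl^\nabla u)|_{\pl M}$, and this correspondence is bijective because $\bar\pl^\nabla$ splits into its tangential and normal components along $\pl M$ with the normal component invertibly paired to $\nabla_\nu$. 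Once this boundary dictionary is in place, equality of one Cauchy data space forces equality of the other, which is exactly what is needed to feed Theorem \ref{iduptogauge} into the proof of Theorem \ref{main thm}.
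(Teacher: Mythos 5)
Your reduction of the null space of $\nabla^*\nabla+W$ to that of $D+V$ via $u\mapsto (u,\bar\pl^{\nabla}u)$ is correct and matches the paper's displayed equivalence, but there is a genuine gap at the final step, where you say ``once this boundary dictionary is in place, equality of one Cauchy data space forces equality of the other.'' The proposition is applied comparatively: one is given $\mathcal{C}_{L_1}=\mathcal{C}_{L_2}$ for two operators with a priori \emph{different, unknown} connections $\nabla_j=d+X_j$, and must conclude $\mathcal{C}_{D+V_1}=\mathcal{C}_{D+V_2}$. Your dictionary recovering $(u,\bar\pl^{\nabla}u)|_{\pl M}$ from $(u|_{\pl M},\nabla_\nu u|_{\pl M})$ is not universal: the normal part of $\nabla u$ is handed to you as Neumann data, but the tangential part at the boundary is $\pl_\tau u + X(\tau)u$, so the dictionary requires the tangential trace $X(\tau)|_{\pl M}$ of the connection one-form --- precisely one of the unknown coefficients. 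If $X_1(\tau)|_{\pl M}\neq X_2(\tau)|_{\pl M}$, the two operators carry \emph{different} dictionaries, and matching Dirichlet--Neumann pairs for $L_1$ and $L_2$ would not produce matching traces $(u,\bar\pl^{\nabla_1}u)|_{\pl M}$ and $(w,\bar\pl^{\nabla_2}w)|_{\pl M}$: concretely, given $(u,v)$ in the null space of $D+V_1$ and $w$ with $L_2w=0$, $w|_{\pl M}=u|_{\pl M}$, $\nabla_{2,\nu}w|_{\pl M}=\nabla_{1,\nu}u|_{\pl M}$, one gets $(\bar\pl+A_2)w|_{\pl M}=v|_{\pl M}$ only after knowing $A_1|_{\pl M}=A_2|_{\pl M}$.

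The paper closes exactly this gap before running the argument you describe: it first invokes the boundary determination result (the extension of \cite[Prop.~4.1]{GTgafa} to systems), which shows that the Cauchy data space of $\nabla^*\nabla+W$ already determines $X|_{\pl M}$ and $W|_{\pl M}$. Hence $\mathcal{C}_{L_1}=\mathcal{C}_{L_2}$ forces $X_1|_{\pl M}=X_2|_{\pl M}$, the two dictionaries coincide, and the transfer of Cauchy data proceeds as in your second paragraph (which is then essentially the paper's computation $(\bar\pl+A_2)w|_{\pl M}=(\nabla_2 w)^{0,1}|_{\pl M}=(\nabla_1 u)^{0,1}|_{\pl M}=v|_{\pl M}$). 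Your phrase that the boundary isomorphism ``depends only on $\nabla$ near $\pl M$'' correctly flags the dependence but treats $\nabla$ as known data; in this inverse problem it is the unknown, so the boundary determination of the connection form is an essential missing ingredient of the proof, not a matter of regularity bookkeeping. (Your concern about Sobolev mapping properties of the dictionary is legitimate but secondary, and the converse direction of your dictionary, recovering $\mathcal{C}_L$ from $\mathcal{C}_{D+V}$, is not needed for the proposition as stated.)
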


\begin{proof}
Replacing $E$ with a trivial $\bbC^n$ bundle, we can write the connection as $\nabla = d +X$ for
some one-form $X.$ 
The boundary determination result of \cite[Prop. 4.1]{GTgafa} extends to show that 
the Cauchy data of $\nabla^*\nabla +W$ determines both $X|_{\pl M}$ and $W|_{\pl M}.$

Suppose that $L_1 = \nabla_1^*\nabla_1 + W_1$ and $L_2 = \nabla_2^*\nabla_2 + W_2$ have the same Cauchy data, and define $D + V_1$ and $D + V_2$ as above. 
Let ${ u \choose v}$ satisfy $(D+ V_1){ u \choose v} = {0 \choose 0}$ so that $u$ is in the null space of $L_1$ and $v = (\bar\pl + A_1)u.$
By assumption there is an element $w$ in the null space of $\nabla_2^*\nabla_2 + W_2$ such that
\begin{equation*}
	w|_{\pl M} = u|_{\pl M}, \quad \nabla_2(\nu) w|_{\pl M} = \nabla_1(\nu) u|_{\pl M}
\end{equation*}
where $\nu$ represents a normal vector to the boundary.
We will be done if we show that the element ${w \choose (\bar\pl + A_2)w }$ which is in the null space of $(D + V_2)$ has the same boundary values as ${u \choose v}.$
However we already know that $\nabla_2 w|_{\pl M} = \nabla_1 u|_{\pl M}$ since the connection one-form restricted to the boundary is determined by the Cauchy space of $L_j$ and the normal derivatives coincide by assumption, and hence
\begin{equation*}
	(\bar\pl + A_2)w|_{\pl M} = (\nabla_2 w)^{1,0}|_{\pl M}
	= (\nabla_1 u)^{1,0}|_{\pl M} = v|_{\pl M}
\end{equation*}
as required.
\end{proof}

Thus if $\mathcal C_{L_1} = \mathcal C_{L_2},$ we know from Theorem \ref{dirac ident} that  
there exist bundle isomorphisms $F$ and $G,$ equal to the identity at the boundary, such that
\begin{equation}\label{resultat}
	\begin{pmatrix}
	\Omega_1 + W_1 & A_1^* \\ A_1 & -\mathrm{Id}
	\end{pmatrix}
	 =  \begin{pmatrix}
	G & 0 \\ 0 & F^{-1}
	\end{pmatrix} 
	\begin{pmatrix}
	\Omega_2 + W_2 & A_2^* \\ A_2 & -\mathrm{Id}
	\end{pmatrix} \begin{pmatrix}
	F & 0 \\ 0 & G^{-1}
	\end{pmatrix} +
	\begin{pmatrix}
	0 & G\bar{\pl}^*G^{-1} \\ F^{-1}\bar{\pl}F & 0
	\end{pmatrix}.
\end{equation}
We underline the fact that the adjoints $A_j^*$ are adjoints of $A_j$ 
considered as maps from $E$
to $E\otimes (T^{(0,1)}M)^*$, where $E$ is equipped with Hermitian products 
$\cjg\cdot,\cdot\cjd_E$ and $E\otimes (T^{(0,1)}M)^*$ is equipped with the Hermitian product 
\[ \cjg u_1\otimes v_1, u_2\otimes v_2\cjd := \frac{1}{2i} \cjg u_1,u_2\cjd_E  *(v_1\wedge \bbar{v_2}).\]
where $*$ is the Hodge star operator.  The matrix valued form $A_j$ can be written 
$\mc{A}_j\otimes u_j$ for some $u_j\in \Lambda^{0,1}(M)$ and $\mc{A}_j\in {\rm End}(E)$, we define the adjoint of $A_j$ as an element of ${\rm End}(E)\otimes (T^{0,1}M)^*$ to be  
$A_j^{*_E}:= \mc{A}_j^{*}\otimes \bbar{u_j}\in  {\rm End}(E)\otimes (T^{1,0}M)^*$ where $\mc{A}_j^*$ is simply the adjoint 
of $\mc{A}_j$ with respect to $\cjg \cdot,\cdot\cjd_E$. Thus, one has $A_j^*=i* A_j^{*_E}\wedge $. 
Denoting $\nabla_j=d+X_j$, the fact that $\nabla_j$ is a Hermitian connection implies that 
$X_j=A_j-A_j^{*_E}$.
Now, \eqref{resultat} implies that 
\begin{equation}\label{impliesthat}
F=G^{-1}, \,\,\,\,\, A_1=F^{-1}A_2F+F^{-1}\bar{\pl}F,\,\,\,\,\,
A_1^*=GA_2^*G^{-1}+G\bar{\pl}^*G^{-1}.
\end{equation} 
Therefore, using also that $\bar{\pl}^*=-i*\pl $, we deduce  
\[F^{-1}X_2F=F^{-1}(A_2-A_2^{*_E})F=A_1-A_1^{*_E}-F^{-1}\bar{\pl} F-F^{-1}\pl F=X_1-F^{-1}dF.\]
This shows that $F^{-1}\Omega_2F=\Omega_1$ since $\Omega_j$ is the curvature of $\nabla_j$, but using 
again \eqref{resultat} one also get $F^{-1}W_2F=W_1$.
By \eqref{impliesthat}, we also obtain  
\[\bar{\pl}F^{-1}+(A_1F^{-1}-F^{-1}A_2)=0, \,\,\,\,  \bar{\pl}F^{*}+(A_1F^*-F^*A_2)=0\]
with $F^{-1}|_{\pl M}=F^*|_{\pl M}={\rm Id}$, and thus by uniqueness of the solution of the elliptic boundary value problem 
\[\bar{\pl} H+A_1H-HA_2=0, \quad H|_{\pl M}={\rm Id}\] 
we deduce $F^{-1}=F^*$. 
We thus have proved 
\begin{theorem}
If the two operators $L_1,L_2$ have same Cauchy data space, then there exists a  unitary bundle isomorphism
$F:E\to E$  such that $F\nabla_1F^{-1}=\nabla_2$, $FW_1F^{-1}=W_2$  and $F|_{\pl M}={\rm Id}$.
\end{theorem}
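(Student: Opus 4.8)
The plan is to reduce the second-order problem to the already-treated first-order (Dirac-type) problem via the Bochner--Kodaira factorization \eqref{FactForm}, apply Theorem \ref{dirac ident}, and then translate the resulting gauge relation back into the language of connections, finishing with a unitarity argument. First I would use the equivalence that $u$ satisfies $(\nabla^*\nabla + W)u = 0$ if and only if the pair $(u, \bar\pl^{\nabla}u)$ lies in the null space of the block operator $D+V = \begin{pmatrix} \Omega + W & (\bar\pl + A)^* \\ \bar\pl + A & -\mathrm{Id}\end{pmatrix}$, where $\bar\pl^{\nabla} = \bar\pl + A$. The key step here is to show that $\mc{C}_{L_1} = \mc{C}_{L_2}$ forces $\mc{C}_{D+V_1} = \mc{C}_{D+V_2}$. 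This rests on a boundary determination result: writing $\nabla = d + X$ in a trivializing chart, the Cauchy data of $\nabla^*\nabla + W$ determines both $X|_{\pl M}$ and $W|_{\pl M}$, extending \cite[Prop. 4.1]{GTgafa}. Granting this, if $u$ lies in the null space of $L_1$ and $w$ is the matching null solution of $L_2$ with the same Dirichlet and normal data, then agreement of the tangential part of the connection one-form on $\pl M$ gives $\bar\pl^{\nabla}w|_{\pl M} = \bar\pl^{\nabla}u|_{\pl M}$, so the two lifted pairs share the same boundary values.

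Next I would invoke Theorem \ref{dirac ident} (equivalently Theorem \ref{iduptogauge}) to obtain bundle isomorphisms $F, G$, equal to the identity on $\pl M$, realizing the conjugation \eqref{resultat}. Reading off the diagonal block structure of that identity yields \eqref{impliesthat}: $F = G^{-1}$, together with $A_1 = F^{-1}A_2 F + F^{-1}\bar\pl F$ and the corresponding relation for $A^*$. Writing the Hermitian connections as $\nabla_j = d + X_j$ with $X_j = A_j - A_j^{*_E}$, and using $\bar\pl^* = -i*\pl$, these combine to $F^{-1}X_2 F = X_1 - F^{-1}dF$, i.e. $F\nabla_1 F^{-1} = \nabla_2$. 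Since $\Omega_j$ is the curvature of $\nabla_j$, this gauge relation forces $F^{-1}\Omega_2 F = \Omega_1$, and substituting back into \eqref{resultat} gives $FW_1 F^{-1} = W_2$.

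The main obstacle, and the final step, is to upgrade $F$ to a \emph{unitary} gauge. Here I would observe that both $F^{-1}$ and $F^*$ satisfy the same first-order elliptic boundary value problem $\bar\pl H + A_1 H - H A_2 = 0$ with $H|_{\pl M} = \mathrm{Id}$: indeed \eqref{impliesthat} yields $\bar\pl F^{-1} + A_1 F^{-1} - F^{-1}A_2 = 0$ and $\bar\pl F^* + A_1 F^* - F^* A_2 = 0$, while $F^{-1}|_{\pl M} = F^*|_{\pl M} = \mathrm{Id}$. By uniqueness of the solution of this boundary value problem one concludes $F^{-1} = F^*$, so $F$ is unitary, completing the proof. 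The two delicate points are securing the well-posedness (in particular uniqueness) of this $\bar\pl$-type boundary value problem within the low-regularity class of \eqref{regrsp}, and checking that $F$ inherits enough regularity ($C^1$) for all of the algebraic manipulations above to be legitimate; both follow from the regularity of the holomorphic trivializations supplied by Proposition \ref{holotrivial} and Lemma \ref{equivCR}.
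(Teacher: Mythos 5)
Your proposal is correct and follows essentially the same route as the paper's own proof: the Bochner--Kodaira reduction to the block Dirac-type system, the boundary determination of $X|_{\pl M}$ and $W|_{\pl M}$ to transfer equality of Cauchy data, the application of Theorem \ref{dirac ident} to obtain \eqref{resultat} and \eqref{impliesthat}, and the identical unitarity argument showing $F^{-1}$ and $F^*$ both solve $\bar\pl H + A_1H - HA_2 = 0$ with $H|_{\pl M}={\rm Id}$. No substantive differences to report.
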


\end{section}

\begin{section}{Systems in domains of $\cc$}

In our subsequent applications we will need to consider $\pl$-type systems where the leading symbols are not self-adjoint operators. We consider in this section such first order systems in a domain $\Omega\subset \cc$. Let $D$ be the 
operator acting on $H^1(\Omega, \cc^m\oplus \cc^n)$, by
\[ D= \left(\begin{matrix}
\bar{\pl} & 0\\
0 & \pl
\end{matrix}\right),\,\, \textrm{ with }\,Ê \bar{\pl}\left(\begin{matrix}u_1\\ \dots\\ u_m \end{matrix}\right)=\left(\begin{matrix}\pl_{\bar{z}}u_1\\ \dots\\ \pl_{\bar{z}}u_m \end{matrix}\right),\,\,  \pl \left(\begin{matrix}v_1\\ \dots\\ v_n \end{matrix}\right)=\left(\begin{matrix}\pl_{z}v_1\\ \dots\\ \pl_{z}v_n \end{matrix}\right)
\]
where $\pl_z:=\pl_{x}-i\pl_y$ and $\pl_{\bar{z}}:=\pl_x+i\pl_y$ in the $z=x+iy$ complex coordinate on $\Omega$.
We consider a potential $V\in {\rm End}(\cc^m\oplus\cc^n)$ satisfying $V=\left(\begin{matrix}
A & Q^+\\
Q^- & B
\end{matrix}\right)$ with
\begin{equation}\label{regulV}
\begin{gathered}
Q^+\in W^{1,q}(\Omega; M_{n\x m}(\cc)),\,\, 
Q^+\in W^{1,q}(\Omega; M_{m\x n}(\cc)) ,\,\, q>2\\
A\in C^{r}\cap W^{s,p}(\Omega; M_{m\x m}(\cc)) ,\,\, B\in C^{r'}\cap W^{s',p'}(\Omega; M_{n\x n}(\cc)),\\
\textrm{ with }0 < r < s, \quad p \in (1,\infty) \,\,\,{\rm satisfy }\,\,
	r+s>1, \quad r\notin\nn, \quad sp>2m+2\\
	\textrm{ and }0 < r' < s', \quad p \in (1,\infty) \,\,\,{\rm satisfy }\,\,
	r'+s'>1, \quad r'\notin\nn, \quad s'p'>2n+2
\end{gathered}
\end{equation}
where $M_{m\x n}(\cc)$ denote the set of complex valued $m\x n$ matrices. 
We define the Cauchy data of the $D +V$ system by
\[{\cal C}_{D+ V} :=\left\{   \left(\begin{matrix}u|_{\pl\Omega}\\ v|_{\pl\Omega}\end{matrix}\right);  
\left(\begin{matrix}u\\ v\end{matrix}\right) \in H^1(\Omega, \cc^m\oplus \cc^n) \mid (D + V)  \left(\begin{matrix}u\\ v\end{matrix}\right) = 0  \right\}\]

The proofs of Proposition \ref{boundary holo} and Proposition \ref{idpotential} easily extend to cover this situation:
\begin{theorem}\label{systeminC}
Let $V_j =\left(\begin{matrix}
A_j & Q^+_j\\
Q^-_j & B_j
\end{matrix}\right)$ be matrix valued potentials satisfying regularity conditions in (\ref{regulV}). If ${\cal C}_{D + V_1}={\cal C}_{D + V_2}$, then there exists invertible matrices $F_j \in C^1(\Omega, {\rm End}(\C^m))$ 
and $G_j \in C^1(\Omega,{\rm End}(\C^n))$ such that $F_1 = F_2$, $G_1 = G_2$ on $\partial \Omega$ and
\[\bar\partial F_j = F_j A_j,\ \ \ \ \ \partial G_j = G_j B_j.\] 
Furthermore, $Q^+_1 = F Q^+_2 G^{-1}$ and $Q^-_1 = G Q^-_2 F^{-1}$ where
\[F := F_1^{-1}F_2,\ \ \ \ G = G_1^{-1} G_2.\] 
\end{theorem}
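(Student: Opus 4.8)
The plan is to run, almost \emph{verbatim} up to a relabelling of the two Cauchy--Riemann operators, the two--step scheme by which Theorem~\ref{iduptogauge} was deduced from Propositions~\ref{boundary holo} and~\ref{idpotential}: first strip off the diagonal connection terms $A_j,B_j$ by holomorphic trivializations, and then identify the remaining off--diagonal potentials $Q_j^\pm$. For the first step, note that $\bar\pl+A_j$ and $\pl+B_j$ are Cauchy--Riemann operators on the trivial bundles $\Omega\x\cc^m$ and $\Omega\x\cc^n$ (the second with respect to the conjugate complex structure, $\pl+B_j=\overline{\bar\pl+\bar B_j}$). Since the exponents in~\eqref{regulV} are exactly those of~\eqref{HiTayReg} for ranks $m$ and $n$, Lemma~\ref{equivCR} and Proposition~\ref{holotrivial}, applied to $\overline\Omega$ as a compact surface with boundary, furnish invertible $F_j\in C^{1+r}\cap W^{1+s,p}\subset C^1$ and $G_j\in C^{1+r'}\cap W^{1+s',p'}\subset C^1$ with $\bar\pl F_j=F_jA_j$ and $\pl G_j=G_jB_j$. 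As in~\eqref{factorization} this yields the factorization
\[
 D+V_j=
 \begin{pmatrix} F_j^{-1} & 0 \\ 0 & G_j^{-1}\end{pmatrix}
 \begin{pmatrix} \bar\pl & \til Q_j^+ \\ \til Q_j^- & \pl \end{pmatrix}
 \begin{pmatrix} F_j & 0 \\ 0 & G_j\end{pmatrix},
 \qquad \til Q_j^+:=F_jQ_j^+G_j^{-1},\quad \til Q_j^-:=G_jQ_j^-F_j^{-1},
\]
reducing the problem to the off--diagonal system $\begin{pmatrix}\bar\pl & \til Q^+\\ \til Q^- & \pl\end{pmatrix}$ with $\til Q^\pm\in W^{1,q}$.

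Next I would match the trivializations on the boundary, the analogue of Proposition~\ref{boundary holo}, which is what guarantees that the two reduced systems share the same Cauchy data. The trivializations are unique only up to left multiplication by holomorphic (resp.\ anti--holomorphic) invertible matrices, and I would use this freedom to force $F_1=F_2$ and $G_1=G_2$ on $\pl\Omega$. On a planar domain a holomorphic Morse function $\Phi=\phi+i\psi$ with a prescribed critical point can be written explicitly (e.g.\ $\Phi(z)=\tfrac12(z-z_0)^2$), so Proposition~\ref{denseset} is not even needed. I would build CGO solutions of $D+V_1$ carrying the phase $e^{\Phi/h}$ in the $\bar\pl$--component and $e^{\overline\Phi/h}$ in the $\pl$--component (annihilated by $\bar\pl$ and $\pl$ respectively), and CGO solutions of the adjoint $(D+V_2)^*$, whose leading part is $\diag(\bar\pl^*,\pl^*)=\diag(-\pl,-\bar\pl)$, carrying the conjugate phases $e^{-\overline\Phi/h}$ and $e^{-\Phi/h}$. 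Feeding both families into Green's formula and using equality of Cauchy data (which lets me replace the $V_1$--solution by a $V_2$--solution with identical boundary values, killing the boundary term), the only non--oscillatory contribution as $h\to0$ is the one in which the phases cancel, namely the $\langle (A_2-A_1)\,\cdot\,,\,\cdot\,\rangle$ coupling of the two top components; all remainders are $o(1)$ by the Neumann--series estimates of Lemma~\ref{estimate1}, with $\bar\pl^{-1},\pl^{-1}$ now the solid Cauchy transforms on $\Omega$. This leaves
\[
 \int_\Omega \cjg F_2(A_2-A_1)F_1^{-1}a,\,b\cjd=0
\]
for all holomorphic $a$ and anti--holomorphic $b$; using $\bar\pl F_j=F_jA_j$ this is $\int_{\pl\Omega} i_{\pl\Omega}^*\cjg F_2F_1^{-1}a,b\cjd=0$, so by Lemma~\ref{orthogonality condition} the boundary endomorphism $F_2F_1^{-1}$ extends to a holomorphic $F$ on $\Omega$, invertible by the index--swapping argument of Proposition~\ref{boundary holo}. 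Replacing $F_2$ by $F^{-1}F_2$ achieves $F_1=F_2$ on $\pl\Omega$; the companion CGO solutions with prescribed \emph{bottom} amplitude isolate $B_2-B_1$ in the $\pl$--block and give $G_1=G_2$ on $\pl\Omega$ in the same way.

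With the trivializations matched on $\pl\Omega$ the two reduced systems have identical Cauchy data, and I would run the argument of Proposition~\ref{idpotential}. The relevant CGO solutions for $\begin{pmatrix}\bar\pl & \til Q^+\\ \til Q^- & \pl\end{pmatrix}$ again carry $e^{\Phi/h}$ on top and $e^{\overline\Phi/h}$ on the bottom; inserting a solution of $D+\til V_1$ and one of $(D+\til V_2)^*$ into Green's formula and applying the same phase cancellation and a stationary--phase expansion at the critical point $z_0$ of $\Phi$ yields $\til Q_1^\pm(z_0)=\til Q_2^\pm(z_0)$, hence $\til Q_1^\pm=\til Q_2^\pm$ on all of $\Omega$. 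Unwinding, $F_1Q_1^+G_1^{-1}=F_2Q_2^+G_2^{-1}$ and $G_1Q_1^-F_1^{-1}=G_2Q_2^-F_2^{-1}$, that is $Q_1^+=FQ_2^+G^{-1}$ and $Q_1^-=GQ_2^-F^{-1}$ with $F=F_1^{-1}F_2$, $G=G_1^{-1}G_2$, as claimed.

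The substantive point — the only place where this setting genuinely departs from the surface case — is the phase bookkeeping. Because the leading operator $\diag(\bar\pl,\pl)$ is \emph{not} self--adjoint in the symmetric manner that $\begin{pmatrix}0 & \bar\pl^*\\ \bar\pl & 0\end{pmatrix}$ is (its formal adjoint interchanges $\bar\pl$ and $\pl$), one must assign the holomorphic phase $e^{\Phi/h}$ to the $\bar\pl$--block and the anti--holomorphic phase $e^{\overline\Phi/h}$ to the $\pl$--block, with conjugate phases on the adjoint solutions, and then verify that the surviving non--oscillatory couplings in Green's formula land on precisely the blocks to be isolated: $A,B$ in the boundary step and $Q^\pm$ in the interior step. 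Once this is checked, every estimate and every stationary--phase computation transcribes directly from Lemmas~\ref{estimate1} and~\ref{orthogonality condition} and Propositions~\ref{first order CGO a = 0}, \ref{idpotential}, \ref{boundary holo}.
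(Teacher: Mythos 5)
Your proposal is correct and takes essentially the same approach as the paper: the paper's entire proof of Theorem \ref{systeminC} is the remark that the proofs of Propositions \ref{boundary holo} and \ref{idpotential} extend to this setting, and your argument (holomorphic/anti-holomorphic trivializations $\bar\pl F_j=F_jA_j$, $\pl G_j=G_jB_j$ via Lemma \ref{equivCR} and Proposition \ref{holotrivial}, boundary matching of $F_j,G_j$ through CGO solutions, Green's formula and Lemma \ref{orthogonality condition}, then stationary phase identifying the conjugated $\til{Q}^{\pm}$) is precisely that extension carried out in detail, with the factorization and the unwinding $Q_1^+=FQ_2^+G^{-1}$, $Q_1^-=GQ_2^-F^{-1}$ checked correctly. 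Your phase bookkeeping for the non-self-adjoint leading part $\diag(\bar\pl,\pl)$ — cancelling phases on the diagonal $A,B$ couplings in the boundary step, oscillatory $e^{\pm 2i\psi/h}$ on the off-diagonal $Q^{\pm}$ couplings in the interior step — is exactly the verification the paper leaves implicit, and your observation that the explicit Bukhgeim phase $\Phi(z)=\tfrac12(z-z_0)^2$ renders Proposition \ref{denseset} and the Riemann--Roch form $\theta$ unnecessary on a planar domain is a valid simplification.
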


\end{section}

\end{document}